\newcommand \bel {\begin{equation}\label}
\newcommand \ee {\end{equation}}
\newcommand \be {\begin{equation}}
\newcommand \RR {\mathbb R}
\newcommand \HH {\mathbb H}
\newcommand \LL {\mathbb L}
\newcommand \del \partial
\newcommand \Lcal {\mathcal L}
\newcommand \Dcal {\mathcal D}
\newcommand \Tcal {\mathcal T}
\newcommand \bei {\begin{itemize}}
\newcommand \eei {\end{itemize}}
\newtheorem{theorem}{\color{black}\indent Theorem}[section]
\newtheorem{lemma}{\color{black}\indent Lemma}[section]
\newtheorem{proposition}{\color{black}\indent Proposition}[section]
\newtheorem{remark}{\color{black}\indent Remark}[section]
\begin{document}
\large
\title{Dynamical behavior near self-similar blowup waves for the generalized $b$-equation}
\author{
{\sc Weiping Yan}\thanks{School of Mathematics, Xiamen University, Xiamen 361000, P.R. China. Email: yanwp@xmu.edu.cn.}
\thanks{Laboratoire Jacques-Louis Lions, Sorbonne Université, 4, Place Jussieu, 75252 Paris, France.}
}
\date{June 17, 2018}

\maketitle

\begin{abstract}  
In this paper, we consider the explicit wave-breaking mechanism and its dynamical behavior near this singularity for the generalized $b$-equation. This generalized $b$-equation arises from the shallow water theory, which includes the Camassa-Holm equation, the Degasperis-Procesi equation, the Fornberg-Whitham equation, the Korteweg-de Vires equation and the classical $b$-equation. More precisely, we find that there exists an explicit self-similar blowup solution for the generalized $b$-equation. Meanwhile, this self-similar blowup solution is asymptotic stability in a parameters domain, but instability in other parameters domain.
\end{abstract}



\section{Introduction and main results} 
\setcounter{equation}{0}

The $b$-equation arises from nonlinear shallow water theory, which takes the following form
\bel{E1-1R0}
\aligned
&m_t+c_0u_x+um_x+bu_xm+\Gamma u_{xxx}=0,\quad\quad t>0,\quad x\in\RR,\\
&u(0,x)=u_0(x),
\endaligned
\ee
where $c_0$, $b$, $\Gamma$ are arbitrary real constants, the notation $m=u-\alpha^2u_{xx}$ with constant $\alpha\in\RR$. 

Equation (\ref{E1-1R0}) can be derived as the family of asymptotically equivalent shallow water wave equations that emerges at quadratic order accuracy for any $b\neq-1$
by an appropriate Kodama transformation \cite{DGH1,DGH2}. The $b$-equation can be written as a nonlinear dispersive equation
$$
\aligned
&u_t-\alpha^2u_{txx}+c_0u_x+(b+1)uu_x+\Gamma u_{xxx}=\alpha^2(bu_xu_{xx}+uu_{xxx}),\quad\quad t>0,\quad x\in\RR,\\
&u(0,x)=u_0(x), 
\endaligned
$$
which contains the three kinds of famous shallow water equation. One can see \cite{EY} for the well-posedness, blow-up phenomena, and global solutions for $b$-equation (\ref{E1-1R0}).

In this paper we study a generalized $b$-equation as follows
\bel{E1-1R00}
\aligned
&u_t-\alpha^2u_{txx}+c_0u_x+(b+1)uu_x+\Gamma u_{xxx}=\alpha^2(c_1u_xu_{xx}+c_2uu_{xxx}),\quad\quad t>0,\quad x\in\RR,\\
&u(0,x)=u_0(x), 
\endaligned
\ee
where we introduce two more parameters $c_1,c_2\in\RR$ than equation (\ref{E1-1R0}) to contain more shallow water equations, that is, the Camassa-Holm (CH) equation, the Degasperis-Procesi (DP) equation, the Fornberg-Whitham (FW) equation, and the Korteweg-de Vires (KdV) equation.

If $\alpha=1$, $c_0=\Gamma=0$, $b=2$, $c_1=2$ and $c_2=1$, then equation (\ref{E1-1R00}) becomes the famous Camassa-Holm equation \cite{CH}
$$
u_t-u_{txx}+3uu_x=2u_xu_{xx}+uu_{xxx},
$$
which admits a bi-Hamiltonian structure \cite{CH,C11}, and it can be written as 
$$
U_t=-J_1{\del H_2\over \del U}=-J_2{\del H_1\over \del U},
$$
where
$$
\aligned
&U=u-u_{xx},\quad J_1=\del-\del^3,\quad J_2=\del U+U\del,\\
& H_1={1\over 2}\int_{\RR}(u^2+u_x^2)dx,\quad H_2={1\over 2}\int_{\RR}(u^2+uu_x^2)dx.
\endaligned
$$
Moreover, it has an exact peaked soliton
$$
u_c(t,x)=ce^{-|x-ct|},
$$
where constant $c>0$ is the speed of wave. The stability of it has been obtained in \cite{C3,C4,C5,C6,HK}.

The Camassa-Holm equation is one of most famous shallow water models, which has attracted hundreds of papers to study it and its generalizations \cite{Bre1,Bre2,C9,C11,C7,C1,C10,C8,HK}.
One of important topic focusing on the CH equation is the wave break phenomenon. A sufficient condition for the breakdown criterion depends on the value of initial data at one point $x_0\in\RR$. 
The classical result is obtained by Constantin and Escher \cite{C2}, this criterion required 
the initial data $u_0(x)\in\HH^3(\RR)$ at some point $x_0\in\RR$ such that $u'(x_0)<-{\sqrt{2}\over 2}\|u_0\|_{\HH^1}$.
After this result, there are many results concerning on the improvement of initial data condition at some point $x_0\in\RR$, one can see \cite{C7,C0,Liu,Mc} for more details. 
Recently, Brandolese \cite{Br1} gave a new sufficient condition for the breakdown, it needs the initial data satisfies $u_0'(x_0)+|u_0(x_0)|<0$ for at least one point $x_0\in\RR$.

If $\alpha=1$, $c_0=\Gamma=0$, $b=3=c_1=3$ and $c_2=1$, then equation (\ref{E1-1R00}) becomes the famous Degasperis-Procesi equation
$$
u_t-u_{txx}+4uu_x=3u_xu_{xx}+uu_{xxx}, \quad (t,x)\in\RR^+\times\RR.
$$
which was introduced by Degasperis-Procesi \cite{DP} also to model the propagation of unidirectional shallow water waves over a flat bottom. It is a geodesic flow of a rigid invariant symmetric linear connection on the diffeomorphism group of the circle \cite{EK}.

The DP equation can be rewritten in Hamiltonian form as follows
$$
m_t=B_1{\del H_1\over \del m}=B_2{\del H_2\over \del m},
$$
where $B_1$ and $B_2$ form a compatible bi-Hamiltonian pair \cite{Ho}, and 
$$
m=u-u_{xx},\quad  B_1=\del_x(1-\del_x^2)(4-\del_x^2),\quad B_2=m^{{2\over 3}}\del_xm^{1\over 3}(\del_x-\del_x^3)m^{{1\over 3}}\del_xm^{2\over 3}.
$$
It admits the conservation laws
$$
E_1(u)=\int_{\RR}(1-\del_x^2)udx,\quad E_2(u)=\int_{\RR}\Big((1-\del_x^2)u\Big)\Big((4-\del_x^2)^{-1}u\Big)dx,\quad E_3(u)=\int_{\RR}u^3dx,
$$
the DP equation has not only an exact peakon \cite{DHH}, but also shock peakons \cite{Lun}
$$
u(t,x)=-{1\over t+k}sgn(x)e^{-|x|},\quad k>0.
$$

The CH equation and DP equation can rewritten as the following coupled system
$$
\aligned
&m=u-u_{xx},\\
&m_t+m_xu+\lambda mu_x=0,
\endaligned
$$
where it is the CH equation for $\lambda=2$, it turns to the DP equation for $\lambda=3$.
Both two equations possess smooth solutions that develop singularities in finite time via a process that captures the essential features of breaking waves \cite{W}. There are many papers to study the well-posedness theory and blowup analysis for the DP equation and blowup analysis \cite{CH,ELY,HC,HHG,LY,Yin1,Yin2}.
One of the important features of the DP equation is that it not only has peakon solitons \cite{DHH}, but also shock peakons \cite{Lun}.

If $\alpha=1$, $c_0=-1$, $\Gamma=0$, $b={1\over2}$, $c_1={9\over2}$ and $c_2={3\over2}$, then equation (\ref{E1-1R00}) becomes
the Fornberg-Whitham (FW) equation, which was first introduced by Whitham \cite{W} in 1967, Whitham and Fornberg \cite{FW} in 1978 to model the shallow water, it has the form 
\bel{E1-1R1}
u_t-u_{txx}-u_x+{3\over 2}uu_x-{3\over 2}uu_{xxx}-{9\over 2}u_xu_{xx}=0, \quad (t,x)\in\RR^+\times\RR,
\ee
with the initial data
$$
u(0,x)=u_0(x),
$$
where $\omega={1\over c_0}=\sqrt{gh}>0$ is a positive constant, $h$ and $g$ are the mean fluid depth and the gravitational constant, respectively.
The FW equation is not integrable and there is no useful conservation laws which would be used to make estimates of solutions, meanwhile, it does not only allow traveling wave solutions like the KdV equation, but also possess peakon solutions as some Camassa-Holm type equations. 
On can see \cite{FW, Iv, Wei} for more details.

If $\alpha=c_0=0$, $b=2$ and $\Gamma=1$, then equation (\ref{E1-1R00}) becomes the famous KdV equation
$$
u_t-3uu_x+u_{xxx}=0,
$$
which describes the unidirectional propagation of waves at the free surface of shallow water under the influence of gravity.
It is well-know that the KdV equation with $\omega=0$ admits a smooth soliton \cite{DJ}
$$
u_c(t,x)=a_0\sec h^2(\mu_0(x-ct)),
$$
where parameters $\mu_0={\sqrt{a_0}\over 2}$ and $c=c_0+a_0$.
Bourgain \cite{Bo} showed that solutions to the KdV equation are global as long as the initial data is square integrable.
One can also see related result in \cite{KPV,Tao}.

Recently, Yan \cite{Yan1,Yan2} found there are explicit self-similar blowup solutions for a class of shallow water equations, including the Camassa-Holm equation, the Degasperis-Procesi equations, the Dullin-Gottwald-Holm equation, the Korteweg-de Vires equation, the dispersive rod equation and the Benjamin-Bona-Mahony equation. Meanwhile, those explicit self-similar blowup solutions for the Camassa-Holm equation, the Degasperis-Procesi equations, the Dullin-Gottwald-Holm equation and the dispersive rod equation are asymptotic stability, but for the Korteweg-de Vires equation and the Benjamin-Bona-Mahony equation are instability.
In the present paper, we study the explicit wave breaking phenomenon for $b$-equation (\ref{E1-1R00}). More precisely, we first give an explicit self-similar blowup solution of $b$-equation (\ref{E1-1R00}), then asymptotic stability and instability of this blowup solution are shown according to the domain of parameters $c_1,c_2,b$.

Here the first result gives the existence of explicit self-similar solution.

\begin{theorem}
The generalized $b$-equation (\ref{E1-1R00}) admits an explicit self-similar blowup solution
\bel{ER1-01}
u_0(t,x)=-{1\over b+1}({x\over T-t}+c_0),
\ee
where constant $b\neq-1$.
\end{theorem}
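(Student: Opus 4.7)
The plan is a direct verification: substitute the ansatz $u_0(t,x)=-\frac{1}{b+1}\bigl(\frac{x}{T-t}+c_0\bigr)$ into equation (\ref{E1-1R00}) and check that it satisfies the equation identically for every $t<T$ and every $x\in\RR$. The key structural observation that makes the verification trivial is that $u_0$ is \emph{affine} in $x$, so every second or higher $x$-derivative of $u_0$ vanishes. Consequently the entire dispersive part of (\ref{E1-1R00}) drops out:
\[
u_{xx}\equiv 0,\qquad u_{xxx}\equiv 0,\qquad u_{txx}\equiv 0,
\]
which kills the $\alpha^2u_{txx}$, $\Gamma u_{xxx}$, $\alpha^2 c_1 u_x u_{xx}$ and $\alpha^2 c_2 u u_{xxx}$ terms regardless of the values of $\alpha,\Gamma,c_1,c_2$. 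Hence the PDE reduces to the Burgers-type transport equation
\[
u_t+c_0 u_x+(b+1)uu_x=0,
\]
and this is where the hypothesis $b\neq-1$ enters (otherwise the ansatz is ill-defined).

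Next I would compute the three remaining derivatives of the ansatz, namely $u_t=-\frac{x}{(b+1)(T-t)^2}$ and $u_x=-\frac{1}{(b+1)(T-t)}$, and then evaluate $c_0 u_x$ and $(b+1)u_0 u_x$. The products collapse cleanly: $(b+1)u_0 u_x = \frac{1}{(b+1)(T-t)}\bigl(\frac{x}{T-t}+c_0\bigr)$, which splits as $\frac{x}{(b+1)(T-t)^2}+\frac{c_0}{(b+1)(T-t)}$. Adding $u_t$ and $c_0 u_x$ one sees the $x$-terms cancel against each other and the constant-in-$x$ terms cancel against each other, giving $0$ identically. This completes the verification.

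There is really no obstacle here — the only thing worth commenting on is the self-similar interpretation. Writing $\xi=\frac{x}{T-t}$, the ansatz reads $u_0(t,x)=-\frac{1}{b+1}(\xi+c_0)$, so the profile is linear in the self-similar variable and blows up like $(T-t)^{-1}$ as $t\nearrow T$ along any fixed $x\neq 0$, with $\|u_{0,x}(t,\cdot)\|_{L^\infty}=\frac{1}{(b+1)(T-t)}\to\infty$. This is the wave-breaking mechanism announced in the abstract, and it underlies the stability/instability analysis to be carried out in the later sections. The fact that this affine profile solves the PDE regardless of the dispersive parameters $\alpha,\Gamma,c_1,c_2$ is precisely why the statement can be made uniformly in the generalized family, and it will serve as the background solution about which the linearized dynamics will be studied.
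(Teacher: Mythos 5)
Your proposal is correct and is essentially the paper's own argument: the paper passes to the similarity coordinates $\tau=-\log(T-t)$, $\rho=x/(T-t)$ and observes that the affine profile $\phi=-\frac{1}{b+1}(\rho+c_0)$ annihilates every term of the transformed equation (\ref{E2-2}), which is exactly your direct substitution written in different variables. Your computation — all dispersive terms vanish because $u_0$ is affine in $x$, and the transport terms $u_t+c_0u_x+(b+1)u_0\,\partial_x u_0$ cancel identically — is complete and needs no further justification.
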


The second result gives the asympototic stability of self-similar solution given in (\ref{ER1-01}) by parameters assumptions.

\begin{theorem}
Let $s>3$.
Assume that parameters $\alpha\neq0$ and $b,c_1,c_2$ satisfy
$$
\aligned
&{2(b+1)+c_2-2c_1\over b+1}>0,\\
&{2c_1+1-c_2\over b+1}>0.
\endaligned
$$
Then the explicit self-similar blowup solution (\ref{ER1-01})
of generalized $b$-equation (\ref{E1-1R00}) is asymptotic stability, that is, for a sufficient small $\sigma$, if 
$$
\|u_0(x)+{1\over b+1}({x\over T}+c_0)\|_{\HH^s}\leq\sigma,
$$
then there is a solution $u(t,x)$ of equation (\ref{E1-1R00}) such that
$$
\|u(t,x)-u_0(t,x)\|_{\HH^s}\lesssim (T-t)^{3(b+1)+c_2-2c_1\over b+1},\quad \forall (t,x)\in(0,T)\times\RR.
$$
\end{theorem}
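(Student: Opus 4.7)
The approach is to linearize (\ref{E1-1R00}) around $u_0$, pass to self-similar variables that turn $u_0$ into a stationary profile, and then close an energy estimate in $\HH^s$ whose coercivity constant is at least the exponent $\lambda=\frac{3(b+1)+c_2-2c_1}{b+1}$ appearing in the statement. First I would set $u(t,x)=u_0(t,x)+v(t,x)$ in (\ref{E1-1R00}). Since the explicit profile (\ref{ER1-01}) is affine in $x$, all derivatives $\del_x^k u_0$ with $k\ge 2$ vanish and $\del_x u_0=-\frac{1}{(b+1)(T-t)}$, so after subtracting the equation satisfied by $u_0$ the residual equation for $v$ has the schematic form
$$
(1-\alpha^2\del_x^2)v_t+\Lcal(t)v=N(v),
$$
where $\Lcal(t)$ collects the linear-in-$v$ terms with coefficients depending on $t$ only through $u_0$ and $u_{0,x}$, and $N(v)=-(b+1)vv_x+\alpha^2(c_1v_xv_{xx}+c_2vv_{xxx})$ is at least quadratic.

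Second I would introduce the self-similar variables $\tau=-\log(T-t)$, $y=x/(T-t)$, and write $v(t,x)=(T-t)^{\lambda}w(\tau,y)$. Under this change $u_0$ becomes the $\tau$-independent profile $-\frac{1}{b+1}(y+c_0)$, so $\Lcal$ turns into an autonomous operator acting on $w$, while the $c_0$, $\Gamma$ and subleading $\alpha^2$ pieces acquire decaying prefactors $e^{-\tau}$ that play the role of small perturbations. I would then differentiate $\|w(\tau,\cdot)\|_{\HH^s_y}^2$ in $\tau$, apply $\del_y^s$ to the equation, invert $(1-\alpha^2\del_x^2)$ on the top-order quasilinear term $u_0v_{xxx}$, and perform the usual commutator and integration-by-parts bookkeeping. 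The two parameter hypotheses are used precisely here: $\frac{2(b+1)+c_2-2c_1}{b+1}>0$ yields positivity of the zeroth-order symmetrized coefficient (it is essentially $\lambda-1$ combined with contributions from the drift $y\del_y$, the zeroth-order term $\frac{v}{T-t}$, and the trace of $u_{0,x}$), while $\frac{2c_1+1-c_2}{b+1}>0$ fixes the sign of the commutator with the quasilinear top-derivative term $u_0\del_y^3$ after one further integration by parts. Together they yield
$$
\frac{d}{d\tau}\|w\|_{\HH^s_y}^2\le -2\mu\|w\|_{\HH^s_y}^2+C\|w\|_{\HH^s_y}^3+Ce^{-\tau}\|w\|_{\HH^s_y}^2
$$
for some $\mu\ge\lambda$.

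Third, for $s>3$ the Sobolev space $\HH^s(\RR)$ is a Banach algebra and $\HH^s\hookrightarrow W^{3,\infty}$, so the cubic-type terms in $N(v)$ are handled by standard Moser-type tame product estimates, and the contributions from the nonlocal Helmholtz inverse $(1-\alpha^2\del_x^2)^{-1}$ are $\HH^s$-bounded by Fourier multiplier theory. A bootstrap argument starting from $\|w(0)\|_{\HH^s_y}\le\sigma$ then converts the differential inequality above into $\|w(\tau)\|_{\HH^s_y}\lesssim\sigma e^{-\mu\tau}$ for all $\tau\ge 0$, which on unrescaling produces the claimed bound $\|u(t)-u_0(t)\|_{\HH^s}\lesssim(T-t)^{\lambda}$ uniformly in $t\in(0,T)$.

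The step I expect to be hardest is the coercivity: verifying that the net quadratic form obtained from integration by parts of the drift $y\del_y$, the quasilinear transport $u_0\del_y^3$ composed with the nonlocal Helmholtz inverse, and the subleading contribution $\alpha^2 c_1 u_{0,x}\del_y^2$ is strictly positive at the top order $s$ precisely under the two displayed parameter hypotheses. Getting the algebra correct simultaneously at every order $k\le s$, while keeping track of the Helmholtz factor at the highest derivative, is the delicate technical core of the argument.
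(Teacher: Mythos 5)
Your skeleton---perturb around the profile, pass to $\tau=-\log(T-t)$, derive a coercive $\HH^s$ energy inequality whose dissipation constants come from the two parameter hypotheses, control the nonlinearity by algebra/Moser estimates, and bootstrap---is the same as the paper's. But there is a genuine structural error at the point you yourself flag as the core. You assert that in the self-similar variables $(\tau,y)$ with $y=x/(T-t)$ the linear part becomes an autonomous operator while the $c_0$, $\Gamma$ and $\alpha^2$ (Helmholtz/dispersive) pieces acquire \emph{decaying} prefactors $e^{-\tau}$. The opposite happens: since $\del_x=e^{\tau}\del_y$, every $\alpha^2$- and $\Gamma$-term picks up a factor $e^{2\tau}$ (this is exactly equation (\ref{E2-2}) of the paper), so the Helmholtz operator degenerates to $1-\alpha^2e^{2\tau}\del_y^2$, whose inverse is the multiplier $(1+\alpha^2e^{2\tau}\xi^2)^{-1}$ concentrating on frequencies $|\xi|\lesssim e^{-\tau}$. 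The linearization is therefore badly non-autonomous with growing top-order coefficients, and the structure ``dissipative autonomous operator $+$ small decaying perturbation $+$ quadratic nonlinearity'' on which your energy estimate and bootstrap rest does not exist in these variables. The paper's essential device, absent from your plan, is a \emph{second} change of variables $\overline v(\tau,\rho_0)=e^{-\tau}v(\tau,\rho)$, $\rho_0=e^{-\tau}\rho$ (reverting to the physical spatial scale while keeping logarithmic time), followed by passage to the momentum variable $w=(1-\alpha^2\del_{\rho_0}^2)\overline v$, which converts the problem into the first-order nonlocal transport equation (\ref{E4-3}); only there do the integrations by parts produce exactly the coefficients $\frac{2(b+1)+c_2-2c_1}{b+1}$ on $\|w\|_{\HH^s}^2$ and $\frac{2c_1+1-c_2}{b+1}$ on $\|w\|_{\HH^{s-1}}^2$. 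Your attribution of the second hypothesis to ``the sign of the commutator with $u_0\del_y^3$'' is also not how it arises: it is the coefficient of the lower-order term generated by the smoothing convolutions $p\star w$.

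Two further consequences of this gap. First, the exponent $\frac{3(b+1)+c_2-2c_1}{b+1}$ is never actually derived in your plan (you only posit $\mu\ge\lambda$); in the paper it is $1+\frac{2(b+1)+c_2-2c_1}{b+1}$, the extra $1$ coming from the amplitude factor in $\overline v=e^{-\tau}v$, which your ansatz $v=(T-t)^{\lambda}w$ builds in by fiat rather than proves. Second, your bootstrap presupposes a local-in-$\tau$ solution of the quasilinear problem; the paper supplies existence separately (dissipativity of $\Lcal$, Lumer--Phillips, Duhamel and a contraction in a small ball), and some such step is needed before any continuation argument can run.
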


The last result states the instability of self-similar blowup solution given in (\ref{ER1-01}) by parameters assumptions.

\begin{theorem}
Let $s>2$.
Assume that parameters $\alpha\neq0$ and $c_1,c_2,b$ satisfy
$$
{2(b+1)+c_2-2c_1\over b+1}<0,
$$
or
$$
\aligned
&2(b+1)+c_2-2c_1=0,\\
&{2c_1+1-c_2\over b+1}< 0.
\endaligned
$$
Then the explicit self-similar blowup solution (\ref{ER1-01})
of generalized $b$-equation (\ref{E1-1R00}) is instability, that is, no matter how small $\sigma>0$, if
$$
\|u_0(x)+{1\over b+1}({x\over T}+c_0)\|_{\HH^s}\leq\sigma,
$$
then any solution $u(t,x)\in\HH^s$ of equation (\ref{E1-1R00}) such that
$$
\|u(t,x)-u_0(t,x)\|_{\HH^s}> {1\over (T-t)^{C_{c_1,c_2,b}}},\quad \forall (t,x)\in(0,T)\times\RR.
$$
where $C_{c_1,c_2,b}$ is a positive constant depending on $c_1,c_2,b$.
\end{theorem}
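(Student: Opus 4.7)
The plan is to mirror the framework used to establish Theorem~1.2 but to exhibit an unstable mode of the linearised flow in self-similar coordinates. Setting $u=u_0+v$ and using the fact that $u_{0,xx}=u_{0,xxx}=0$, substitution into (\ref{E1-1R00}) yields a quasilinear equation for $v$ whose linear coefficients depend on $t$ only through $(T-t)^{-1}$ and on $x$ only through $x/(T-t)$. Introducing the self-similar variables
\[
\tau=-\log(T-t),\qquad y=\frac{x}{T-t},\qquad v(t,x)=(T-t)^{\gamma}w(\tau,y),
\]
with $\gamma$ dictated by the scaling balance forced by $u_0$, and rewriting the evolution through the Helmholtz variable $m=v-\alpha^{2}v_{xx}$ (which is licit since $\alpha\neq 0$), the equation for $w$ becomes autonomous, of the form $\del_\tau w=\Lcal w+N(w)$, with $\Lcal$ a non-local linear operator and $N$ collecting the quadratic remainder.

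The second, and decisive, step is a spectral analysis of $\Lcal$. The stretching piece $y\del_y$, combined with the zero-order contributions coming from $u_{0,x}=-\tfrac{1}{(b+1)(T-t)}$, admits formal polynomial test functions $y^{k}$ for $k\in\ZZ_{\geq 0}$, and a direct computation should produce eigenvalues of the schematic shape
\[
\lambda_k \;=\; -\frac{2(b+1)+c_2-2c_1}{b+1}\,k \;+\; \kappa(c_1,c_2,b),
\]
where $\kappa$ is an affine expression in the parameters whose value, in the borderline case, is proportional to $\tfrac{2c_1+1-c_2}{b+1}$. Under the first hypothesis $\tfrac{2(b+1)+c_2-2c_1}{b+1}<0$, letting $k$ be large gives an eigenvalue of arbitrarily large positive real part, so the self-similar linearisation is violently unstable. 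In the borderline case $2(b+1)+c_2-2c_1=0$ the stretching contribution disappears for every $k$ and the sign of $\kappa$ alone decides the issue; the hypothesis $\tfrac{2c_1+1-c_2}{b+1}<0$ is precisely what ensures $\kappa>0$, producing a single unstable direction.

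With an unstable eigenfunction $\phi_\lambda$ in hand (for some $\lambda>0$), I would take initial data $u(0,x)=u_0(0,x)+\eps\chi(x/T)\phi_\lambda(x/T)$, where $\chi$ is a smooth cutoff making the perturbation lie in $\HH^s(\RR)$; its $\HH^s$ norm is $\Ocal(\eps)$, hence at most $\sigma$ for $\eps$ small. Linear propagation by $e^{\tau\Lcal}$ inflates the unstable component at rate $e^{\lambda\tau}$, while a fixed-point argument in a weighted Sobolev norm, analogous to the one used in the proof of Theorem~1.2 but with the sign of the weight reversed, keeps the nonlinear terms sub-dominant. Reverting to the original variables, exponential growth in $\tau$ becomes polynomial blow-up in $(T-t)^{-1}$, giving the stated lower bound with $C_{c_1,c_2,b}=\lambda-\gamma>0$.

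The main obstacle will be the functional setting for $\Lcal$. The polynomial profiles $y^{k}$ do not belong to $\HH^{s}(\RR)$, so one must either work in Gaussian- or algebraically-weighted Sobolev spaces where they are genuine eigenfunctions, or truncate them and show that the truncation error is dominated by the growth of the unstable projection under $e^{\tau\Lcal}$. The difficulty is compounded in the borderline case $2(b+1)+c_2-2c_1=0$, where the unstable eigenvalue has modulus of order one and one must rule out resonances between this single unstable direction and any neutral spectrum on the imaginary axis before the nonlinear contraction closing the argument can proceed.
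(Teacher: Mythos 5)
Your strategy is genuinely different from the paper's. The paper does no spectral analysis at all: it reuses verbatim the energy computation from the stability proof (the identity (\ref{E4-5}) together with the integrations by parts (\ref{E4-6R1})--(\ref{E4-9})) to arrive at the single differential inequality
$$
{d\over d\tau}\|w\|^2_{\HH^s}+{2(b+1)+c_2-2c_1\over b+1}\|w\|^2_{\HH^s}+{2c_1+1-c_2\over b+1}\|w\|^2_{\HH^{s-1}} \lesssim\|w\|^3_{\HH^s},
$$
and then simply observes that when the damping coefficient changes sign this Bernoulli-type inequality forces exponential growth of $\|w\|_{\HH^s}$ in $\tau$, i.e.\ the polynomial lower bound in $(T-t)^{-1}$. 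Whatever one thinks of the rigor of extracting a \emph{lower} bound from a one-sided differential inequality, that route is a three-line corollary of work already done; it requires no eigenfunctions, no weighted spaces, and no nonlinear bootstrap. Your unstable-mode construction is the more standard and, if completed, the more informative approach, but it is also an order of magnitude more work, and as written it is not a proof.

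The gaps are exactly the ones you flag, and they are not cosmetic. First, the eigenvalue formula $\lambda_k$ is only ``schematic'': you have not actually conjugated the linearisation into the $(\tau,y)$ variables, identified $\gamma$, or computed $\kappa$, so the claimed sign conditions matching the hypotheses are unverified. Second, and more seriously, the polynomial profiles $y^k$ are not in $\HH^s(\RR)$, and on $\HH^s(\RR)$ the dilation part $y\del_y$ of $\Lcal$ does not have discrete spectrum at all --- its spectrum is a vertical strip --- so the entire picture of isolated unstable eigenvalues may simply fail in the space in which the theorem is stated. Passing to Gaussian- or algebraically-weighted spaces changes the statement being proved; truncating $y^k$ and tracking the truncation error against the growth of $e^{\tau\Lcal}$ is a genuine argument that you have not supplied. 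Third, in the regime ${2(b+1)+c_2-2c_1\over b+1}<0$ your own computation produces eigenvalues of arbitrarily large positive real part as $k\to\infty$, which signals that the linearised semigroup may not even act on $\HH^s$ forward in $\tau$; the Duhamel fixed-point argument you invoke to control the nonlinearity then has no linear propagator to be built on, and the standard instability bootstraps (Friedlander--Strauss--Vishik, Grenier) all require quantitative control of the linear flow that you have not established. Until the functional setting for $\Lcal$ is fixed and the nonlinear closure is actually carried out, the argument does not yield the stated lower bound.
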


\begin{remark}
It follows from (\ref{ER1-01}) that there is type I singularity of a class of shallow water equations including the Camassa-Holm equation, the Degasperis-Procesi equation, the Fornberg-Whitham equation, and the Korteweg-de Vires equation. There is 
$$
\del_xu_0(t,x)|_{x=0}=-{1\over(b+1)(T-t)}\rightarrow\infty,\quad\quad t\rightarrow T^{-}.
$$

Moreover, if parameters $\alpha\neq0$ and $b,c_1,c_2$ satisfy
$$
\aligned
&{2(b+1)+c_2-2c_1\over b+1}>0,\\
&{2c_1+1-c_2\over b+1}>0,
\endaligned
$$
then the generalized $b$-equation (\ref{E1-1R00}) has a stable self-similar blowup solution in Sobolev space $\HH^s$ with $s>3$. This includes the Camassa-Holm equation and the Degasperis-Procesi equation. One can see \cite{Yan2} for the corresponding results on those two famous equations. 

If parameters $\alpha\neq0$ and $c_1,c_2,b$ satisfy
$$
{2(b+1)+c_2-2c_1\over b+1}<0,
$$
or
$$
\aligned
&2(b+1)+c_2-2c_1=0,\\
&{2c_1+1-c_2\over b+1}< 0.
\endaligned
$$
then the generalized $b$-equation (\ref{E1-1R00}) has a unstable self-similar blowup solution in Sobolev space $\HH^s$ with $s>2$. This includes the Fornberg-Whitham equation (\ref{E1-1R1}). Here we point out the completely classification of stability and instability of solution (\ref{ER1-01}) according to the parameters is still open.
\end{remark}

Thoughout this paper, we denote the usual norm of $\mathbb{L}^2(\RR)$ and $\HH^{s}(\RR)$ by $\|\cdot\|_{\mathbb{L}^2}$and $\|\cdot\|_{\HH^{s}}$, respectively. $\star$ stands for the convolution. $[A,B]$ denotes the commutator of two linear operators $A$ and $B$.  
$\Dcal(\Lcal)$ is the domain of the operator $\Lcal$. The symbol $a\lesssim b$ means that there exists a positive constant $C$ such that $a\leq Cb$.

The organization of this paper is as follows. In section 2, we give the details of finding explicit self-similar solution of the generalized $b$-equation (\ref{E1-1R00}). This idea follows from \cite{Yan1,Yan2}.
The last section is to study the dynamical behavior near those explicit self-similar solutions, that is, giving the proof of Theorem 1.2 and Theorem 1.3.


\section{The explicit self-similar solutions}\setcounter{equation}{0}

Let parameter $T$ be a positive constant. Introduce the similarity coordinates
\bel{RE2-2}
\tau=-\log(T-t), \quad \rho=\frac{x}{T-t},
\ee
then we denote by
$$
u(t,x)=\phi(-\log(T-t),\frac{x}{T-t}),
$$

Thus $b$-equation (\ref{E1-1R00}) is transformed into an one dimensional quasilinear equation
\bel{E2-2}
\phi_{\tau}+\Big(\rho+c_0+(b+1)\phi\Big)\phi_{\rho}-\alpha^2e^{2\tau}(\phi_{\tau\rho\rho}+2\phi_{\rho\rho})
+(\Gamma-\alpha^2\rho)e^{2\tau}\phi_{\rho\rho\rho}=\alpha^2e^{2\tau}(c_1\phi_{\rho}\phi_{\rho\rho}+c_2\phi\phi_{\rho\rho\rho}).
\ee


Consequently, it is easy to check that the generalized $b$-equation (\ref{E1-1R00}) has an explicit self-similar solution
\bel{E2-2-02}
u(t,x)=-{1\over b+1}({x\over T-t}+c_0).
\ee
Above process gives the proof of Theorem 1.1.

Furthermore, we compare those self-similiar blowup solutions between four classical shallow water models.

Let $\alpha=1$, $c_0=\Gamma=0$, $b=2$, $c_1=2$ and $c_2=1$, we get the Camassa-Holm equation. Using (\ref{E2-2-02}), the CH equation admits a self-similar solution blowup solution as follows
$$
u(t,x)=-{x\over 3(T-t)}.
$$

Let $\alpha=1$, $c_0=\Gamma=0$, $b=3=c_1=3$ and $c_2=1$,  the generalized $b$-equation (\ref{E1-1R00}) is reduced into the Degasperis-Procesi equation, and it has a self-similar solution blowup solution as follows
$$
u(t,x)=-{x\over 4(T-t)}.
$$

Let $\alpha=1$, $c_0=-1$, $\Gamma=0$, $b={1\over2}$, $c_1={9\over2}$ and $c_2={3\over2}$, the generalized $b$-equation (\ref{E1-1R00}) becomes
the Fornberg-Whitham equation which has a self-similar solution blowup solution as follows
$$
u(t,x)=-{2\over3}({x\over T-t}-1).
$$

Let $\alpha=c_0=0$, $b=2$ and $\Gamma=1$, then equation (\ref{E1-1R00}) becomes the famous KdV equation, which has a self-similar solution blowup solution as follows
$$
u(t,x)=-{x\over 3(T-t)}.
$$

In conclusion, one can see that the generalized $b$-equation, the Camassa-Holm equation, the Degasperis-Procesi equation, the Fornberg-Whitham equation and the Korteweg-de Vires equation can have a common blowup profile ${x\over T-t}$, so they have the
same wave breaking behavior, that is
$$
\partial_{x}u|_{x=0}\rightarrow+\infty,~~as~~t\rightarrow T^{-}.
$$

\


\section{Dynamical behavior near self-similar blowup solutions } 
In this section, we consider nonlinear stability and instability of explicit self-similar blowup solution given in section 2
 for the generalized $b$-equation (\ref{E1-1R00}) according to the domain of parameters.

Let the solution of generalized $b$-equation is the form 
\bel{E4-1-0}
u(t,x)=v(t,x)+u_0(t,x),
\ee
where 
$$
u_0(t,x)=-{1\over b+1}({x\over T-t}+c_0)
$$ 
is the explicit self-similar solution for equation (\ref{E1-1R00}). 

Substituting (\ref{E4-1-0}) into (\ref{E1-1R00}), we get an equation in the similarity coordinates (\ref{RE2-2}) as follows
\bel{E4-2}
\aligned
v_{\tau}-\alpha^2e^{2\tau}v_{\tau\rho\rho}-\alpha^2e^{2\tau}(2-{c_1\over b+1})v_{\rho\rho}+&e^{2\tau}\Big(\Gamma+{\alpha^2c_0c_2\over b+1}+{\alpha^2(c_2-b-1)\over b+1}\rho\Big)v_{\rho\rho\rho}\\
&-v+(b+1)vv_{\rho}=\alpha^2e^{2\tau}(c_1v_{\rho}v_{\rho\rho}+c_2vv_{\rho\rho\rho}).
\endaligned
\ee

Introduce the transformation 
$$
\overline{v}(\tau,\rho_0)=e^{-\tau}v(\tau,\rho)
$$
where 
$$
\rho_0:=e^{-\tau}\rho.
$$
Then equation (\ref{E4-2}) becomes
\bel{E4-2r0}
\aligned
\overline{v}_{\tau}-\alpha^2\overline{v}_{\tau\rho_0\rho_0}-\alpha^2(1-{c_1\over b+1})\overline{v}_{\rho_0\rho_0}+e^{-\tau}\Big(\Gamma+{\alpha^2c_0c_2\over b+1}&+{\alpha^2c_2\over b+1}e^{\tau}\rho\Big)\overline{v}_{\rho_0\rho_0\rho_0}
+\Big((b+1)\overline{v}-\rho_0\Big)\overline{v}_{\rho_0}\\
&=\alpha^2(c_1\overline{v}_{\rho_0}\overline{v}_{\rho_0\rho_0}+c_2\overline{v}~\overline{v}_{\rho_0\rho_0\rho_0}).
\endaligned
\ee

Note that the operator $1-\alpha^2\del_{\rho_0\rho_0}$ has a fundamental solution 
$$
p(x)={1\over 2\alpha}e^{-|{\rho_0\over \alpha}|}.
$$

We denote the operator $(1-\alpha^2\del_{\rho_0\rho_0})^{{1\over 2}}$ by $\Lambda$, then $\Lambda^{-2}\overline{v}=p(\rho_0)\star \overline{v}$ for all $\overline{v}\in\LL^2$.
Let 
$$
w(\tau,\rho_0)=\overline{v}(\tau,\rho_0)-\alpha^2\overline{v}_{\rho_0\rho_0}(\tau,\rho_0),
$$ 
then $v(\tau,\rho_0)=p\star w$, where $\rho_0\in\RR$ and  $\star$ denotes the convolution.
Thus equation (\ref{E4-2r0}) can be rewritten as a non-local equation 
\bel{E4-3}
\aligned
w_{\tau}+(1-{c_1\over b+1})w-&e^{-\tau}\Big({\Gamma\over\alpha^2}+{c_2(c_0+e^{\tau}\rho_0)\over b+1}\Big)w_{\rho_0}-(1-{c_1\over b+1})(p\star w)\\
&\quad\quad+e^{-\tau}\Big({\Gamma\over\alpha^2}+{c_0c_2+(c_2-b-1)e^{\tau}\rho)\over b+1}\Big)(p\star w)_{\rho_0}\\
&=-c_2w_{\rho_0}(p\star w)+\Big((c_1+c_2-b-1)(p\star w)-c_1w\Big)(p\star w)_{\rho_0},
\endaligned
\ee
with the initial data
\bel{E4-3R1}
\aligned
w(0,\rho_0):=w_0(\rho_0)&=\overline{v}_0(\rho_0)-\alpha^2\overline{v}_{\rho_0\rho_0}(0,\rho_0)\\
&=u_0(x)-\alpha^2 u''_0(x)+{1\over b+1}({x\over T}+c_0),
\endaligned
\ee
and the boundary condition
\bel{E4-3R4}
w(\tau,\rho_0)|_{\rho_0=\pm\infty}=0,\quad \quad w_{\rho_0}(\tau,\rho_0)|_{\rho_0=\pm\infty}=0,
\ee
where we use 
$$
(p\star w)_{\rho_0\rho_0}=\alpha^{-2}(p\star w-w).
$$

It is easy to see there are two important terms of equation (\ref{E4-3}) as follows
$$
(1-{c_1\over b+1})w-{c_2(c_0+e^{\tau}\rho_0)\over b+1}w_{\rho_0},
$$
which determines the dissipative property of equation (\ref{E4-3}).

\subsection{A priori estimate}

We introduce a commutator estimate which can be found in \cite{Ka}.

\begin{lemma}
Let $s>0$. Then there is
\bel{Er4-1}
\|[\Lambda^s,u]v\|_{\LL^2}\leq C\Big(\|\del_xu\|_{\LL^{\infty}}\|\Lambda^{s-1}v\|_{\LL^2}+\|\Lambda^s u\|_{\LL^2}\|v\|_{\LL^{\infty}}\Big),
\ee
where positive constant $C$ depending on $s$.
\end{lemma}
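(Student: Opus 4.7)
The plan is to follow the standard Fourier--analytic route for Kato--Ponce type commutator estimates, via a Littlewood--Paley decomposition combined with Bony's paraproduct, which is the argument underlying \cite{Ka}. First, I would pass to the Fourier side: writing $\langle\xi\rangle=(1+\alpha^2|\xi|^2)^{1/2}$ for the symbol of $\Lambda$, the commutator takes the form
$$\widehat{[\Lambda^s,u]v}(\xi)=\int\bigl(\langle\xi\rangle^s-\langle\xi-\eta\rangle^s\bigr)\,\hat u(\eta)\,\hat v(\xi-\eta)\,d\eta,$$
so every estimate reduces to controlling the symbol difference $\sigma_s(\xi,\eta)=\langle\xi\rangle^s-\langle\xi-\eta\rangle^s$.

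Next, I would decompose each factor dyadically, $u=\sum_j\Delta_j u$ and $v=\sum_k\Delta_k v$, and split the product $uv$ into three paraproduct regimes: the low--high piece $T_u v$ (where $|\eta|\ll|\xi|$), the high--low piece $T_v u$ (where $|\eta|\gg|\xi|$), and the resonant high--high remainder $R(u,v)$ (where $|\eta|\sim|\xi|$). In the low--high regime a Taylor expansion of $\sigma_s$ in the small variable $\eta$ gives the pointwise bound $|\sigma_s(\xi,\eta)|\lesssim|\eta|\,\langle\xi\rangle^{s-1}$, after which Plancherel absorbs $\eta$ into $\del_x u$ and $\langle\xi\rangle^{s-1}$ into $\Lambda^{s-1}v$, producing the first term on the right-hand side of (\ref{Er4-1}). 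In the high--low regime the symbol difference is only $O(\langle\eta\rangle^s)$, so an $\LL^\infty$ bound on $v$ paired with the $\LL^2$ bound on $\Lambda^s u$ delivers the second term.

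The principal obstacle is the resonant high--high contribution, where the symbol difference offers no automatic gain. Here I would apply Bernstein's inequality to each block $\Delta_j u\,\Delta_k v$ with $|j-k|\le N$, distribute the $\Lambda^s$ loss between the two factors, and sum the resulting double-dyadic series, using $s>0$ to secure geometric decay in $\min(j,k)$; the resulting contribution is dominated by the same two right-hand terms. Summing over the three paraproduct regimes and absorbing the $\alpha$-dependent constants into a single $C=C(s)$ yields (\ref{Er4-1}). Apart from the careful bookkeeping of the resonant sum, which is the only genuinely delicate point, each step reduces to a standard application of Littlewood--Paley machinery.
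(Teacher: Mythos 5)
The paper does not prove this lemma at all: it is quoted verbatim from the reference \cite{Ka} (this is the classical Kato--Ponce commutator estimate, Lemma X1 of Kato--Ponce, Comm. Pure Appl. Math. 41 (1988)), so there is no in-paper argument to compare against. Your paraproduct outline is the standard modern proof of that cited result and the overall strategy is sound: the paracommutator $[\Lambda^s,T_u]v$ gains one derivative on $u$ via the Taylor expansion of the symbol difference and yields $\|\del_xu\|_{\LL^{\infty}}\|\Lambda^{s-1}v\|_{\LL^2}$; the remaining regimes are controlled by $\|\Lambda^su\|_{\LL^2}\|v\|_{\LL^{\infty}}$, with $s>0$ needed to sum the resonant blocks. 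One point where your sketch is looser than the actual argument: in the high--low regime the commutator is not merely $\Lambda^s(T_vu)$ with a symbol bound $O(\langle\eta\rangle^s)$ --- decomposing the subtracted term $u\Lambda^sv$ produces the pieces $T_{\Lambda^sv}u$ and $R(u,\Lambda^sv)$, and bounding $T_{\Lambda^sv}u$ by $\|v\|_{\LL^{\infty}}\|\Lambda^su\|_{\LL^2}$ requires transferring the $2^{js}$ weight from the low-frequency factor $S_{j-1}\Lambda^sv$ onto $\Delta_ju$ via Bernstein before summing; this is routine but is a genuine step, not a direct Plancherel estimate. With that bookkeeping supplied, your argument is a complete and self-contained proof of a statement the paper only cites, which is arguably more than the paper itself offers.
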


We now derive a priori estimate of the solution for equation (\ref{E4-3}). 
Let $s>0$. Applying $\Lambda^s$ to both sides of (\ref{E4-3}), there is
\bel{E4-4}
\aligned
(\Lambda^sw)_{\tau}+(1-{c_1\over b+1})\Lambda^sw&-
e^{-\tau}\Lambda^s\Big[\Big({\Gamma\over\alpha^2}+{c_2(c_0+e^{\tau}\rho_0)\over b+1}\Big)w_{\rho_0}\Big]-(1-{c_1\over b+1})\Lambda^s(p\star w)\\
&\quad\quad+e^{-\tau}\Lambda^s\Big[\Big({\Gamma\over\alpha^2}+{c_0c_2+(c_2-b-1)e^{\tau}\rho)\over b+1}\Big)(p\star w)_{\rho_0}\Big]\\
&=-c_2\Lambda^s\Big(w_{\rho_0}(p\star w)\Big)+\Lambda^s\Big[\Big((c_1+c_2-b-1)(p\star w)-c_1w\Big)(p\star w)_{\rho_0}\Big].
\endaligned
\ee

\begin{lemma}
Let $s>2$ and $\alpha\neq0$. Assume that parameters $c_1,c_2,b$ satisfy
\bel{E4-4r0}
\aligned
&{2(b+1)+c_2-2c_1\over b+1}>0,\\
&{2c_1+1-c_2\over b+1}>0.
\endaligned
\ee
Then the solution $w$ of equation (\ref{E4-3}) satisfies 
$$
\|w\|_{\HH^s}\lesssim e^{-{2(b+1)+c_2-2c_1\over b+1}\tau}\|w_0\|_{\HH^s}.
$$
\end{lemma}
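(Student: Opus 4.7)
The strategy is a weighted $\HH^{s}$ energy estimate: apply $\Lambda^{s}$ to (\ref{E4-3}) and take the $\LL^{2}$ inner product of the resulting identity (\ref{E4-4}) with $\Lambda^{s} w$. The essential observation is that two non-decaying structural pieces drive the dissipation, namely the zeroth-order damping $(1-\tfrac{c_{1}}{b+1})w$ and the scaling transport $-\tfrac{c_{2}\rho_{0}}{b+1}w_{\rho_{0}}$, the latter produced by the cancellation $e^{-\tau}\cdot e^{\tau}\rho_{0}=\rho_{0}$ inside the bracket multiplying $w_{\rho_{0}}$ in (\ref{E4-3}). All other linear pieces are either a multiplication by the $\LL^{2}$-bounded smoothing operator $p\star\,\cdot\,=\Lambda^{-2}$ or carry an explicit $e^{-\tau}$ prefactor, and therefore contribute only exponentially integrable perturbations.

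First I would carry out the $\LL^{2}$ estimate. Integration by parts in $\rho_{0}$ against $-\tfrac{c_{2}\rho_{0}}{b+1}w_{\rho_{0}}$ produces $-\tfrac{c_{2}}{2(b+1)}\|w\|_{\LL^{2}}^{2}$, which combined with the zeroth-order damping yields an effective decay rate proportional to $\tfrac{2(b+1)+c_{2}-2c_{1}}{b+1}$; the positivity of this quantity is the first hypothesis of the lemma. The remaining structural terms involving $p\star w$ are treated by Plancherel, using that $\widehat p(\xi)=(1+\alpha^{2}\xi^{2})^{-1}$ is a nonnegative bounded Fourier multiplier, so that every cross-product $\int(p\star w)w\,d\rho_{0}$ or $\int\rho_{0}(p\star w)_{\rho_{0}}w\,d\rho_{0}$ is a real quadratic form in $\widehat w$ bounded by $\|w\|_{\LL^{2}}^{2}$. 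The second hypothesis $\tfrac{2c_{1}+1-c_{2}}{b+1}>0$ is what guarantees that these cross-contributions, with the signs dictated by (\ref{E4-3}), do not overwhelm the leading damping.

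Passing from $\LL^{2}$ to $\HH^{s}$ for $s>2$ amounts to commuting $\Lambda^{s}$ with the variable coefficients. The smooth coefficient $p\star w$ appearing on the right-hand side of (\ref{E4-4}) is handled by Lemma 3.1 together with the embedding $\HH^{s-1}\hookrightarrow\LL^{\infty}$ and the bound $\|p\star w\|_{W^{1,\infty}}\lesssim\|w\|_{\HH^{s}}$ (a consequence of $\|p\|_{\LL^{1}}+\|p'\|_{\LL^{1}}<\infty$ and $p\star=\Lambda^{-2}$), which together reduce every trilinear pairing to a $C\|w\|_{\HH^{s}}^{3}$ bound after writing $\Lambda^{s}(fg_{\rho_{0}})=f\Lambda^{s}g_{\rho_{0}}+[\Lambda^{s},f]g_{\rho_{0}}$ and integrating by parts on the first summand. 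The main obstacle is that the unbounded coefficient $\rho_{0}$ in the scaling-transport pieces is \emph{not} covered by Lemma 3.1, since $\rho_{0}\notin W^{1,\infty}$; I would circumvent this by noting that at the Fourier level the commutator $[\Lambda^{s},\rho_{0}]f$ is represented by $is\,\xi(1+\xi^{2})^{s/2-1}\widehat f$, a zeroth-order symbol, so $\|[\Lambda^{s},\rho_{0}]f\|_{\LL^{2}}\lesssim\|f\|_{\HH^{s-1}}$ and the commutator is absorbed into the leading dissipation. Assembling all estimates produces a Gronwall-type inequality
\begin{equation*}
\frac{d}{d\tau}\|w\|_{\HH^{s}}^{2}\le -\lambda\,\|w\|_{\HH^{s}}^{2}+Ce^{-\tau}\|w\|_{\HH^{s}}^{2}+C\|w\|_{\HH^{s}}^{3}
\end{equation*}
with $\lambda>0$ the effective rate furnished by the two parameter conditions, and the claimed exponential decay follows by Gronwall together with the smallness of $\|w_{0}\|_{\HH^{s}}$, which keeps the cubic term dominated by the linear dissipation.
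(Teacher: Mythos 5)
Your proposal follows essentially the same route as the paper's proof: apply $\Lambda^s$ to (\ref{E4-3}), pair with $\Lambda^s w$ in $\LL^2$, integrate by parts on the scaling-transport term to extract the damping rate ${2(b+1)+c_2-2c_1\over b+1}$, use the second parameter condition to discard the nonnegative $\HH^{s-1}$ contribution, bound the nonlinearity by $C\|w\|_{\HH^s}^3$ via the Kato commutator estimate and $\HH^{s-1}\hookrightarrow\LL^\infty$, and close with a Bernoulli/Gronwall argument. Your explicit Fourier-side treatment of the commutator $[\Lambda^s,\rho_0]$ (a zeroth-order correction bounded by $\|\cdot\|_{\HH^{s-1}}$) and your observation that smallness of $\|w_0\|_{\HH^s}$ is required to keep the cubic term dominated are in fact more careful than the paper, which performs the weighted integration by parts formally and leaves the smallness implicit in the Bernoulli step.
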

\begin{proof}
Taking the $\LL^2$-inner product with equation (\ref{E4-4}) by $\Lambda^sw$, we get
\bel{E4-5}
\aligned
&{1\over 2}{d\over d\tau}\|w\|^2_{\HH^s}+(1-{c_1\over b+1})\|w\|^2_{\HH^s}-e^{-\tau}\int_{\RR}\Lambda^sw\Lambda^s\Big[\Big({\Gamma\over\alpha^2}+{c_2(c_0+e^{\tau}\rho_0)\over b+1}\Big)w_{\rho_0}\Big]d\rho_0\\
&\quad\quad-(1-{c_1\over b+1})\int_{\RR}\Lambda^sw\Lambda^s(p\star w)d\rho_0
+e^{-\tau}\int_{\RR}\Lambda^s w\Lambda^s\Big[\Big({\Gamma\over\alpha^2}+{c_0c_2+(c_2-b-1)e^{\tau}\rho)\over b+1}\Big)(p\star w)_{\rho_0}\Big]d\rho_0\\
&\quad=\int_{\RR}\Lambda^sw\Lambda^s\Big[\Big((c_1+c_2-b-1)(p\star w)-c_1w\Big)(p\star w)_{\rho_0}\Big]d\rho_0-c_2\int_{\RR}\Lambda^sw\Lambda^s\Big(w_{\rho_0}(p\star w)\Big)d\rho_0.
\endaligned
\ee
Next we estimate each of terms in (\ref{E4-5}). On one hand, we use integration by parts to derive
\bel{E4-6R1}
\aligned
&\int_{\RR}\Lambda^sw\Lambda^s\Big[\Big({\Gamma\over\alpha^2}+{c_2(c_0+e^{\tau}\rho_0)\over b+1}\Big)w_{\rho_0}\Big]d\rho_0\\
&=\int_{\RR}\Big[\Big({\Gamma\over\alpha^2}+{c_2(c_0+e^{\tau}\rho_0)\over b+1}\Big)w_{\rho_0}\Big]\Lambda^{2s}wd\rho_0\\
&=-{c_2\over b+1}e^{\tau}\int_{\RR}\Lambda^sw\Lambda^swd\rho_0-{1\over2}\int_{\RR}\Big({\Gamma\over\alpha^2}+{c_2(c_0+e^{\tau}\rho_0)\over b+1}\Big)(\Lambda^{s}w)^2_{\rho_0}d\rho_0\\
&=-{c_2\over2(b+1)}e^{\tau}\|w\|_{\HH^s}^2,\\
&and\\
&(1-{c_1\over b+1})\int_{\RR}\Lambda^sw\Lambda^s(p\star w)d\rho_0=(1-{c_1\over b+1})\|w\|_{\HH^{s-1}}^2,\\
&and\\
&\int_{\RR}\Lambda^s w\Lambda^s\Big[\Big({\Gamma\over\alpha^2}+{c_0c_2+(c_2-b-1)e^{\tau}\rho)\over b+1}\Big)(p\star w)_{\rho_0}\Big]d\rho_0\\
&={b+1-c_2\over b+1}e^{\tau}\int_{\RR}\Lambda^{s-1}w\Lambda^{s-1}wd\rho_0
+{1\over2}\int_{\RR}\Big({\Gamma\over\alpha^2}+{c_0c_2+(c_2-b-1)e^{\tau}\rho)\over b+1}\Big)(\Lambda^{s-1}w)_{\rho_0}^2d\rho_0\\
&={3(b+1-c_2)\over2(b+1)}e^{\tau}\|w\|_{\HH^{s-1}}^2.
\endaligned
\ee

On the other hand, by (\ref{Er4-1}), H\"{o}lder inequality and $\HH^{s-1}\subset \LL^{\infty}$ with $s>2$, we use integration by parts to get
\bel{E4-6}
\aligned
\int_{\RR}\Lambda^sw\Lambda^s\Big(w_{\rho_0}(p\star w)\Big)d\rho_0&\lesssim\|w_{\rho_0}\|_{\LL^{\infty}}\int_{\RR}\Lambda^sw\Lambda^s(p\star w)d\rho_0\\
&\lesssim \|w\|_{\HH^{s}}^3,
\endaligned
\ee
and
\bel{E4-9}
\aligned
&\Big|\int_{\RR}\Lambda^sw\Lambda^s\Big[\Big((c_1+c_2-b-1)(p\star w)-c_1w\Big)(p\star w)_{\rho_0}\Big]d\rho_0\Big|\\
&=\Big|\int_{\RR}[\Lambda^s,(c_1+c_2-b-1)(p\star w)-c_1w](p\star w)_{\rho_0}\Lambda^{s} wd\rho_0\Big|\\
&\quad+\Big|\int_{\RR}\Big((c_1+c_2-b-1)(p\star w)-c_1w\Big)\Lambda^s(p\star w)_{\rho_0}\Lambda^{s} wd\rho_0\Big|\\
&\lesssim \Big[\Big((c_1+c_2-b-1)\|(p\star w)_{\rho_0}\|_{\LL^{\infty}}+c_1\|w_{\rho_0}\|_{\LL^{\infty}}\Big)\|\Lambda^{s-1}(p\star w)_{\rho_0}\|_{\LL^2}\\
&\quad+\Big((c_1+c_2-b-1)\|\Lambda^s(p\star w)\|_{\LL^2}+c_1\|\Lambda^s w\|_{\LL^2}\Big)\|(p\star w)_{\rho_0}\|_{\LL^{\infty}}\Big]\|w\|_{\HH^s}\\
&\quad +\Big(c_1+c_2-b-1)\|p\star w\|_{\LL^{\infty}}+c_1\|w\|_{\LL^{\infty}}\Big)\|w\|_{\HH^{s}}^2\\
&\lesssim \|w\|^3_{\HH^s}.
\endaligned
\ee

Thus using (\ref{E4-6R1})-(\ref{E4-9}), it follows from (\ref{E4-5}) that
$$
{d\over d\tau}\|w\|^2_{\HH^s}+{2(b+1)+c_2-2c_1\over b+1}\|w\|^2_{\HH^s}+{2c_1+1-c_2\over b+1}\|w\|^2_{\HH^{s-1}} \lesssim\|w\|^3_{\HH^s},
$$
then by (\ref{E4-4r0}), above inequality gives that
$$
{d\over d\tau}\|w\|^2_{\HH^s}+{2(b+1)+c_2-2c_1\over b+1}\|w\|^2_{\HH^s}\lesssim\|w\|^3_{\HH^s},
$$
which is a Bernoulli-type differential inequality, it is equivalent to 
$$
-{d\over d\tau}\|w\|^{-1}_{\HH^s}+{2(b+1)+c_2-2c_1\over b+1}\|w\|^{-1}_{\HH^s}\lesssim 1,
$$
this means that
$$
\|w\|_{\HH^s}\lesssim e^{-{2(b+1)+c_2-2c_1\over b+1}\tau}\|w_0\|_{\HH^s}.
$$
\end{proof}

\subsection{Nonlinear stability of self-similar blowup solutions}

Since we study the asymptotic stability of explicit self-similar solutions for the generalized $b$-equation (\ref{E1-1R00}), it is equivalent to prove global-well posedness for equation (\ref{E4-3}) with the initial data (\ref{E4-3R1}) and boundary condition (\ref{E4-3R4}). 

Let the linear operator $\Lcal$ be the form
\bel{E4-11}
\aligned
\Lcal [w]:&=-(1-{c_1\over b+1})w+e^{-\tau}\Big({\Gamma\over\alpha^2}+{c_2(c_0+e^{\tau}\rho_0)\over b+1}\Big)w_{\rho_0}+(1-{c_1\over b+1})(p\star w)\\
&\quad\quad-e^{-\tau}\Big({\Gamma\over\alpha^2}+{c_0c_2+(c_2-b-1)e^{\tau}\rho)\over b+1}\Big)(p\star w)_{\rho_0}.
\endaligned.
\ee

Equation (\ref{E4-3}) can be rewritten as
\bel{E4-12}
w_t=\Lcal [w]+f(w),
\ee
where the nonlinear term
\bel{E4-12R1}
f(w):=\Big((c_1+c_2-b-1)(p\star w)-c_1w\Big)(p\star w)_{\rho_0}-c_2w_{\rho_0}(p\star w).
\ee

\begin{lemma}
Let $s>2$. There is $\Lcal [w]\in\HH^s$, for any $w\in\Dcal{(\Lcal)}$.
\end{lemma}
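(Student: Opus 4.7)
My plan is to decompose $\Lcal[w]$ into its four additive pieces as written in (\ref{E4-11}) and bound each one in the $\HH^{s}$-norm separately under the assumption $w\in\Dcal(\Lcal)$. Two of the pieces carry constant coefficients and are elementary. The term $-(1-c_{1}/(b+1))w$ has $\HH^{s}$-norm trivially bounded by $\|w\|_{\HH^{s}}$. For the Helmholtz-convolution term $(1-c_{1}/(b+1))(p\star w)$, I use that the Fourier symbol of $p$ satisfies $|\hat p(\xi)|=(1+\alpha^{2}\xi^{2})^{-1}\leq 1$, so by Plancherel $\|p\star w\|_{\HH^{s}}\leq \|w\|_{\HH^{s}}$; differentiating once and invoking the identity $(p\star w)_{\rho_{0}\rho_{0}}=\alpha^{-2}(p\star w-w)$ already recorded in the text gives the companion bound $\|(p\star w)_{\rho_{0}}\|_{\HH^{s}}\lesssim\|w\|_{\HH^{s-1}}$.

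The substantive content lies in the two transport-type pieces, whose coefficients
$$
e^{-\tau}\Big(\tfrac{\Gamma}{\alpha^{2}}+\tfrac{c_{2}(c_{0}+e^{\tau}\rho_{0})}{b+1}\Big),\qquad e^{-\tau}\Big(\tfrac{\Gamma}{\alpha^{2}}+\tfrac{c_{0}c_{2}+(c_{2}-b-1)e^{\tau}\rho_{0}}{b+1}\Big)
$$
contain the linearly growing function $\rho_{0}$. After expansion, the dangerous summands are $\tfrac{c_{2}}{b+1}\rho_{0}w_{\rho_{0}}$ and $-\tfrac{c_{2}-b-1}{b+1}\rho_{0}(p\star w)_{\rho_{0}}$, while the remaining pieces are harmless and estimated as in the previous paragraph. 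For the first I use the Leibniz identity $\del_{\rho_{0}}^{k}(\rho_{0}w_{\rho_{0}})=k\,\del_{\rho_{0}}^{k}w+\rho_{0}\,\del_{\rho_{0}}^{k+1}w$ for each integer $0\leq k\leq\lceil s\rceil$, reducing the $\HH^{s}$-bound to a combination of $\|w\|_{\HH^{s+1}}$ and the weighted quantity $\|\rho_{0}w_{\rho_{0}}\|_{\HH^{s}}$. Both of these are finite precisely by the definition of $\Dcal(\Lcal)$, which is the natural weighted Sobolev space $\{w\in\HH^{s+1}:\rho_{0}w_{\rho_{0}}\in\HH^{s}\}$ forced on us by the unbounded coefficient. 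For the convolution counterpart I rewrite $\rho_{0}(p\star w)_{\rho_{0}}=\rho_{0}(p\star w_{\rho_{0}})$ and exploit the integrability of $|\rho_{0}|^{k}p(\rho_{0})$ for all $k$ (since $p$ decays exponentially), which converts the weight on the convolution into a weight on $w_{\rho_{0}}$ up to controlled error terms.

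Assembling the four bounds yields $\|\Lcal[w]\|_{\HH^{s}}\lesssim \|w\|_{\HH^{s+1}}+\|\rho_{0}w_{\rho_{0}}\|_{\HH^{s}}<\infty$ for every $w\in\Dcal(\Lcal)$. The main obstacle is not any calculational subtlety but the bookkeeping around the growing coefficient: one must verify that the weighted $\HH^{s}$-space that naturally arises from the transport piece is preserved (rather than enlarged) by the remaining three operators, and in particular that the Helmholtz kernel $p$, being evenly decaying, transports polynomial weights without loss. Once this is in place, the Sobolev embedding $\HH^{s-1}\subset\LL^{\infty}$ (which holds for $s>2$) handles all pointwise products and completes the argument.
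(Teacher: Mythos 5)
Your proof is correct in substance but takes a genuinely different, and considerably more careful, route than the paper's. The paper disposes of this lemma in one sentence: since ``there is no singular coefficient'' in $\Lcal$ and the highest derivative is first order, $\Lcal[w]\in\HH^s$. Your decomposition exposes exactly what that sentence glosses over: after the factor $e^{-\tau}$ cancels against $e^{\tau}\rho_0$, the transport coefficients contain the summands $\tfrac{c_2}{b+1}\rho_0$ and $\tfrac{c_2-b-1}{b+1}\rho_0$, which are not singular but are unbounded on $\RR$, and $\rho_0 w_{\rho_0}$ need not lie in $\HH^s$ for a generic $w\in\HH^{s+1}$ (take $w_{\rho_0}\sim(1+\rho_0^2)^{-1/2-\eps}$ with small $\eps$). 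Your resolution --- building the weight into $\Dcal(\Lcal)$, using the Leibniz identity to control $\del^k(\rho_0 w_{\rho_0})$, and transporting the weight through the convolution via the identity $\rho_0(p\star g)=((\cdot)\,p)\star g+p\star((\cdot)\,g)$ together with the exponential decay of $p$ --- is sound at every step, and the Fourier-side bounds $\|p\star w\|_{\HH^s}\leq\|w\|_{\HH^s}$ and $\|(p\star w)_{\rho_0}\|_{\HH^s}\lesssim\|w\|_{\HH^{s-1}}$ are correct. The one caveat is that your definition of $\Dcal(\Lcal)$ as $\{w\in\HH^{s+1}:\rho_0 w_{\rho_0}\in\HH^s\}$ is your own (reasonable) choice: the paper never specifies the domain, and its subsequent lemmas work in unweighted $\HH^s$, tacitly treating these coefficients as bounded. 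So your argument does not merely reprove the lemma by other means; it identifies the hypothesis under which the lemma is actually true, which the paper's own justification does not supply.
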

\begin{proof}
Since there is no singular coefficient in the linear operator $\Lcal$ and the highest order derivative on $\rho_0$ is $1$,
It follows from (\ref{E4-11}) that this result holds for $\rho_0\in\RR$.
\end{proof}

\begin{lemma}
Let $s>2$. The linear operator $\Lcal$ defined in (\ref{E4-11}) is a closed and densely defined linear dissipative operator in $\HH^s$.
\end{lemma}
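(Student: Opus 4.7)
The plan is to verify density, closedness, and dissipativity in turn, with dissipativity reducing to an inner-product computation already carried out in the a priori estimate.

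Because the coefficients multiplying $w_{\rho_0}$ and $(p\star w)_{\rho_0}$ in (\ref{E4-11}) grow linearly in $\rho_0$, I would fix the domain
$$\Dcal(\Lcal)=\{w\in\HH^{s+1}(\RR):\rho_0 w_{\rho_0}\in\HH^s(\RR)\},$$
so that $\Lcal w\in\HH^s$ by the preceding lemma. Density of $\Dcal(\Lcal)$ in $\HH^s(\RR)$ is immediate from $C_c^\infty(\RR)\subset\Dcal(\Lcal)$ together with density of $C_c^\infty(\RR)$ in $\HH^s(\RR)$.

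For closedness, take $w_n\in\Dcal(\Lcal)$ with $w_n\to w$ and $\Lcal w_n\to g$ in $\HH^s$. Every summand in (\ref{E4-11}) other than the weighted first-order terms is a bounded multiplier, a first derivative, or the smoothing convolution $p\star\cdot:\HH^s\to\HH^{s+2}$, and all of these are continuous on $\HH^s$. The weighted terms $\rho_0(w_n)_{\rho_0}$ and $\rho_0(p\star w_n)_{\rho_0}$ converge in the sense of tempered distributions, and their $\HH^s$-membership is forced by the identity $\Lcal w=g$ once the other summands have been identified; hence $w\in\Dcal(\Lcal)$ and $\Lcal w=g$, giving closedness.

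The dissipativity is the main step. I would form
$$\langle \Lcal w,w\rangle_{\HH^s}=\langle\Lambda^s\Lcal w,\Lambda^s w\rangle_{\LL^2}$$
and read off the four linear contributions from (\ref{E4-11}), each of which has already been evaluated in (\ref{E4-6R1}). Collecting signs and cancelling the $e^{\pm\tau}$ factors leaves precisely the coercive expression
$$\langle \Lcal w,w\rangle_{\HH^s}=-\tfrac{1}{2}\cdot\tfrac{2(b+1)+c_2-2c_1}{b+1}\|w\|_{\HH^s}^2-\tfrac{1}{2}\cdot\tfrac{2c_1+1-c_2}{b+1}\|w\|_{\HH^{s-1}}^2,$$
matching the linear part of the Bernoulli inequality obtained in the proof of the preceding a priori estimate. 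Under the standing hypotheses (\ref{E4-4r0}) both fractions are non-negative, so $\langle \Lcal w,w\rangle_{\HH^s}\le 0$ and $\Lcal$ is dissipative.

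The principal obstacle is the linear growth of the coefficient $\rho_0$: it forces the weighted choice of domain and, in the dissipativity calculation, it is exactly the integration by parts against the factor $\rho_0$ that converts the otherwise antisymmetric transport piece $\rho_0 w_{\rho_0}$ into the symmetric coercive multiplier $-c_2/(2(b+1))\|w\|_{\HH^s}^2$. Combined with the zeroth-order term $-(1-c_1/(b+1))\|w\|_{\HH^s}^2$, this algebraic cancellation is what produces the parameter constraint (\ref{E4-4r0}); once it is in place, the remaining verifications are routine.
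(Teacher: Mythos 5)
Your proposal follows essentially the same route as the paper: the dissipativity is obtained by pairing $\Lambda^s\Lcal[w]$ with $\Lambda^s w$ and reading off the four linear contributions already evaluated in (\ref{E4-6R1}), which yields the quadratic form $-\tfrac12\tfrac{2(b+1)+c_2-2c_1}{b+1}\|w\|_{\HH^s}^2-\tfrac12\tfrac{2c_1+1-c_2}{b+1}\|w\|_{\HH^{s-1}}^2\le 0$ under (\ref{E4-4r0}) (the paper states this identity without the factor $\tfrac12$, but the sign conclusion is the same). The only difference is that you supply an explicit weighted domain and a closedness argument where the paper simply declares density and closedness ``easy to check,'' which is a reasonable elaboration rather than a different method.
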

\begin{proof}
It is easy to check that $\Lcal [w]$ is a densely defined and closed linear operator in $\HH^s$. Here,we only prove that $\Lcal$ is dissipative, i.e. 
$$(\Lcal [w],w)_s\leq0.$$

To see this, direct computations give that
\bel{E4-13}
\aligned
&\int_{\Omega}(\Lambda^s\Lcal [w])\Lambda^swd\rho_0\\
&=-(1-{c_1\over b+1})\|w\|^2_{\HH^s}+e^{-\tau}\int_{\RR}\Lambda^sw\Lambda^s\Big[\Big({\Gamma\over\alpha^2}+{c_2(c_0+e^{\tau}\rho_0)\over b+1}\Big)w_{\rho_0}\Big]d\rho_0\\
&\quad\quad+(1-{c_1\over b+1})\int_{\RR}\Lambda^sw\Lambda^s(p\star w)d\rho_0
-e^{-\tau}\int_{\RR}\Lambda^s w\Lambda^s\Big[\Big({\Gamma\over\alpha^2}+{c_0c_2+(c_2-b-1)e^{\tau}\rho)\over b+1}\Big)(p\star w)_{\rho_0}\Big]d\rho_0
\endaligned
\ee
which combining with (\ref{E4-6R1}) and (\ref{E4-4r0}) means that
$$
\int_{\Omega}(\Lambda^s\Lcal [w])\Lambda^swd\rho_0=-{2(b+1)+c_2-2c_1\over b+1}\|w\|^2_{\HH^s}-{2c_1+1-c_2\over b+1}\|w\|^2_{\HH^{s-1}}<0.
$$
This completes the proof.
\end{proof}

\begin{lemma}
Let $s>2$. The operator $\Lcal$ defined in (\ref{E4-11}) is invertible in $\HH^s$. Moreover, the operator $\Lcal$ generates a $\mathbb{C}_0$-semigroup $(\textbf{S}(t))_{\tau\geq0}$ in $\HH^s$.
\end{lemma}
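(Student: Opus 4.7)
The strategy is to apply the Lumer--Phillips theorem. From the previous lemma we already know that $\Lcal$ is closed, densely defined and dissipative on $\HH^s$. Under the parameter assumptions (\ref{E4-4r0}) we in fact obtain the strict coercive estimate
$$
(\Lcal[w], w)_{\HH^s} \leq -\delta \|w\|^2_{\HH^s}, \qquad \delta := \frac{2(b+1)+c_2-2c_1}{b+1} > 0,
$$
from the computation in (\ref{E4-13}) combined with (\ref{E4-6R1}). Hence by Cauchy--Schwarz $\|(\lambda I - \Lcal)[w]\|_{\HH^s} \geq (\lambda + \delta)\|w\|_{\HH^s}$ for every $\lambda \geq 0$, which at once yields injectivity and closed range of $\lambda I - \Lcal$.

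To verify the range condition of Lumer--Phillips, I would show that $\mathrm{Range}(\lambda I - \Lcal)$ is dense in $\HH^s$, equivalently that $\ker(\lambda I - \Lcal^*) = \{0\}$. The formal adjoint $\Lcal^*$ is obtained by integration by parts against $\Lambda^{2s}$: the zeroth-order and convolution terms are symmetric in the $\HH^s$ inner product, since the kernel $p(\rho_0) = \frac{1}{2\alpha}e^{-|\rho_0|/\alpha}$ is even, while the transport term transposes to the same transport operator on $v$ together with an additional zeroth-order piece $-\frac{c_2}{b+1}v$ coming from the derivative of its coefficient. Repeating the integration-by-parts chain in (\ref{E4-6R1}) now for $\Lcal^*$ yields $(\Lcal^*[v], v)_{\HH^s} \leq -\delta \|v\|^2_{\HH^s}$, so $\Lcal^*$ is strictly dissipative and $\ker(\lambda I - \Lcal^*) = \{0\}$ for $\lambda \geq 0$. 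Combined with the closed-range property from the first step, this gives $\mathrm{Range}(\lambda I - \Lcal) = \HH^s$.

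With the range condition in hand, Lumer--Phillips produces the $\mathbb{C}_0$-semigroup $(\textbf{S}(\tau))_{\tau\geq0}$ of contractions on $\HH^s$. Taking $\lambda = 0$ in the same argument (permissible because $\delta > 0$) shows that $0$ lies in the resolvent set of $\Lcal$, so $\Lcal^{-1} : \HH^s \to \HH^s$ exists and is bounded by $1/\delta$, proving invertibility.

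The main obstacle is the rigorous handling of the unbounded coefficient $\frac{c_2 \rho_0}{b+1} w_{\rho_0}$, which makes $\Lcal$ a transport operator with linearly growing velocity. The natural domain $\Dcal(\Lcal)$ is therefore not simply $\HH^{s+1}$ but a weighted Sobolev variant, and the integration by parts used to identify $\Lcal^*$ must be justified via a density argument with Schwartz functions together with the decay condition (\ref{E4-3R4}) at $\rho_0 = \pm\infty$; in particular one must verify that the boundary terms at infinity produced by the linearly growing coefficient vanish. Once this domain and boundary-term analysis is settled, the remainder of the plan is routine semigroup theory.
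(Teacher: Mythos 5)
Your proposal is correct in outline and follows the paper's overall strategy (coercivity of the quadratic form $(\Lcal[w],w)_{\HH^s}$ plus the Lumer--Phillips theorem), but it handles the crucial range/surjectivity step by a genuinely different route. The paper proves injectivity from the identity $(\Lcal[w],w)_{\HH^s}=-\tfrac{2(b+1)+c_2-2c_1}{b+1}\|w\|^2_{\HH^s}-\tfrac{2c_1+1-c_2}{b+1}\|w\|^2_{\HH^{s-1}}$, derives the a priori bound $\|w\|_{\HH^s}\leq C\|g\|_{\HH^s}$ for $\Lcal[w]=g$, and then invokes ``the standard theory of elliptic-type equations'' to produce a solution; you instead obtain closed range from the quantitative injectivity $\|(\lambda I-\Lcal)[w]\|_{\HH^s}\geq(\lambda+\delta)\|w\|_{\HH^s}$ and dense range from triviality of $\ker(\lambda I-\Lcal^{*})$, the adjoint being dissipative by the same integration by parts. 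Your duality argument is arguably the more defensible of the two, since $\Lcal$ is a first-order nonlocal transport operator rather than an elliptic one, so the paper's appeal to elliptic existence theory does not literally apply; an a priori estimate alone does not yield surjectivity without some such functional-analytic closure. You also correctly flag the issues the paper is silent on: the linearly growing coefficient $\tfrac{c_2}{b+1}\rho_0\,\partial_{\rho_0}$ (which forces a careful choice of $\Dcal(\Lcal)$ and a check that boundary terms at infinity vanish), and, implicitly, the fact that $\Lambda^{s}$ does not commute with multiplication by $\rho_0$, so identifying the $\HH^s$-adjoint requires controlling the commutator $[\Lambda^s,\rho_0]$ --- the paper's computation (\ref{E4-6R1}) glosses over the same point. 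Two small caveats: the formal $\LL^2$-transpose of $a(\rho_0)\partial_{\rho_0}$ is $-a\partial_{\rho_0}-a'$, not ``the same transport operator'' plus a zeroth-order term (the sign of the transport part flips, though the resulting quadratic form, and hence your dissipativity conclusion for $\Lcal^*$, is unaffected); and since $\Lcal$ depends explicitly on $\tau$, what is really generated is an evolution family rather than a single semigroup --- a defect shared with the paper's statement.
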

\begin{proof}
To see the existence of $\Lcal^{-1}$, we need to prove the operator $\Lcal$ are injective and surjective.
We first show $\Lcal$ is injective. Let $w\in\Dcal{(\Lcal)}$ such that
$$
\Lcal [w]=0,
$$
which gives that
\bel{E4-14}
\aligned
-(1-{c_1\over b+1})\Lambda^sw&+
e^{-\tau}\Lambda^s\Big[\Big({\Gamma\over\alpha^2}+{c_2(c_0+e^{\tau}\rho_0)\over b+1}\Big)w_{\rho_0}\Big]+(1-{c_1\over b+1})\Lambda^s(p\star w)\\
&\quad\quad-e^{-\tau}\Lambda^s\Big[\Big({\Gamma\over\alpha^2}+{c_0c_2+(c_2-b-1)e^{\tau}\rho)\over b+1}\Big)(p\star w)_{\rho_0}\Big]=0.
\endaligned
\ee

Multiplying (\ref{E4-14}) by $\Lambda^sw$, and integrating by parts over $\RR$, we derive
$$
\aligned
\int_{\RR}\Lambda^s\Lcal [w]\Lambda^swd\rho_0=-{2(b+1)+c_2-2c_1\over b+1}\|w\|^2_{\HH^s}-{2c_1+1-c_2\over b+1}\|w\|^2_{\HH^{s-1}}=0,
\endaligned.
$$
which combining with the boundary condition (\ref{E4-3R4}) implies that $w=0$. So the operator $\Lcal$ is injective.

Next, we show the operator $\Lcal$ is surjective. $\forall g\in\HH^1$, set
\bel{E4-15}
\Lcal [w]=g.
\ee

Applying $\Lambda^s$ to equation (\ref{E4-15}), then multiplying it by $\Lambda^sw$, and integrating by parts over $\Omega$, 
$$
{2(b+1)+c_2-2c_1\over b+1}\|w\|^2_{\HH^s}+{2c_1+1-c_2\over b+1}\|w\|^2_{\HH^{s-1}}=-2\int_{\Omega}g\Lambda^swd\rho_0,
$$
from which, using Young's inequality, we have
$$
\|w\|_{\HH^s}\leq C\|g\|_{\HH^s}.
$$
By the standard theory of elliptic-type equations of the general order, there exists a unique weak solution $w\in\HH^1$. For such a solution, we have $w\in\HH^{s+1}$ if for $g\in\HH^s$.
So the operator $\Lcal$ is surjective. Furthermore, by the Lumer-Phillips Theorem \cite{P}, the operator $\Lcal$ generates a $\mathbb{C}_0$-semigroup $(\textbf{S}(t))_{\tau\geq0}$ in $\HH^s$.
\end{proof}

 By Lemma 3.3-3.5, we can conclude the following result.
\begin{proposition}
Let $s>2$. The operator $\Lcal$ defined in (\ref{E4-11}) generates a $\mathbb{C}_0$-semigroup $(\textbf{S}(t))_{\tau\geq0}$ in $\HH^s$.
Moreover, the Cauchy problem 
\begin{eqnarray*}
&&\frac{d}{d \tau}w(\tau)=\Lcal w(\tau),\\
&&w(0)=w_0,
\end{eqnarray*}
with the vanishing boundary has a unique solution
\begin{equation*}
w(\tau)=\textbf{S}(\tau)w_0,
\end{equation*}
where the initial data $w_0$ is given in (\ref{E4-3R1}).
\end{proposition}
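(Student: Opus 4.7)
The plan is to assemble Lemmas 3.3--3.5 into the Lumer--Phillips generation theorem and then invoke the standard abstract semigroup--Cauchy correspondence; most of the technical work has already been carried out in the preceding lemmas, so the proof is essentially organizational.

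First I would verify that $\Lcal$ meets every hypothesis of the Lumer--Phillips theorem on $\HH^s$. Lemma 3.3 shows that $\Lcal$ maps $\Dcal(\Lcal)$ into $\HH^s$ without loss of regularity (the highest-order $\rho_0$-derivative entering $\Lcal$ is of order one); Lemma 3.4 yields that $\Lcal$ is densely defined, closed, and strictly dissipative with the explicit coercivity estimate
$$
(\Lcal [w],w)_{\HH^s}=-\frac{2(b+1)+c_2-2c_1}{b+1}\|w\|_{\HH^s}^{2}-\frac{2c_1+1-c_2}{b+1}\|w\|_{\HH^{s-1}}^{2};
$$
and Lemma 3.5 shows that $\Lcal$ is a bijection $\Dcal(\Lcal)\to\HH^s$, so $0\in\rho(\Lcal)$. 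The Lumer--Phillips range condition formally requires surjectivity of $\lambda I-\Lcal$ for some $\lambda>0$; this is obtained by the same Lax--Milgram / elliptic argument as in Lemma 3.5 with an additional $\lambda\|w\|_{\HH^s}^{2}$ contribution in the coercive quadratic form, which only strengthens the a priori estimate and the existence conclusion. Consequently $\Lcal$ generates a contraction $\mathbb{C}_0$-semigroup $(\textbf{S}(\tau))_{\tau\geq 0}$ on $\HH^s$.

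Having the semigroup, I would conclude by invoking the classical abstract Cauchy correspondence: for $w_0\in\Dcal(\Lcal)$, the function $w(\tau):=\textbf{S}(\tau)w_0$ lies in $C([0,\infty);\Dcal(\Lcal))\cap C^{1}([0,\infty);\HH^s)$, satisfies $\frac{d}{d\tau}w=\Lcal w$ with $w(0)=w_0$, and is unique (uniqueness follows instantly from the dissipativity estimate of Lemma 3.4); for general $w_0\in\HH^s$, the same formula yields the unique mild solution. The vanishing-at-infinity boundary condition (\ref{E4-3R4}) is automatically encoded in the $\HH^s(\RR)$ framework since $\HH^s(\RR)\hookrightarrow C_0(\RR)$ for $s>1/2$, so no extra constraint is required.

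The main (and really the only) obstacle is bridging the gap between what Lemma 3.5 actually proves (invertibility of $\Lcal$, i.e.\ the $\lambda=0$ range condition) and the $\lambda>0$ range condition required by Lumer--Phillips. As noted, this is a routine repetition of the Lemma 3.5 argument with the shifted operator $\lambda I-\Lcal$, but it must be carried out explicitly to ensure the generation theorem applies; no other difficulty is expected.
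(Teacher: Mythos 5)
Your proposal follows essentially the same route as the paper: Proposition 3.1 is proved there simply by the sentence ``By Lemma 3.3--3.5, we can conclude the following result,'' with the Lumer--Phillips theorem already invoked inside Lemma 3.5, and your assembly of the dissipativity estimate, the density/closedness, and the surjectivity into the generation theorem plus the standard abstract Cauchy correspondence is exactly that argument spelled out. Your additional observation --- that Lemma 3.5 only verifies the range condition at $\lambda=0$ while Lumer--Phillips requires surjectivity of $\lambda I-\Lcal$ for some $\lambda>0$ --- correctly identifies and repairs a step the paper skips, and your proposed fix (rerunning the coercivity/elliptic argument for the shifted operator, where the extra $\lambda\|w\|_{\HH^s}^2$ only helps) is the right one.

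One caveat applies equally to your write-up and to the paper's: the operator $\Lcal$ in (\ref{E4-11}) depends explicitly on $\tau$ through the coefficients $e^{-\tau}\bigl(\Gamma/\alpha^2+c_0c_2/(b+1)\bigr)$, so $\frac{d}{d\tau}w=\Lcal w$ is a non-autonomous problem. A single one-parameter $\mathbb{C}_0$-semigroup $(\textbf{S}(\tau))_{\tau\ge0}$ generated by ``the'' operator $\Lcal$ does not furnish the solution via $w(\tau)=\textbf{S}(\tau)w_0$; one needs an evolution family $U(\tau,s)$ (e.g.\ Kato's theory for hyperbolic evolution equations, using that the dissipativity and range estimates of Lemmas 3.4--3.5 hold uniformly in $\tau$). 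Since the paper itself treats $\Lcal$ as if it were autonomous, your proposal matches the paper's argument, but if you want a fully rigorous proof this is the step that genuinely needs more than the cited lemmas.
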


We now return to nonlinear equation (\ref{E4-12}). Using Duhamel's formula and Proposition 3.1, equation (\ref{E4-12}) can be formulated as an abstract integral equation
$$
w(\tau)=\textbf{S}(\tau)w_0+\int_0^{\tau}\textbf{S}(\tau-s)f(w(s))ds.
$$

Let $s>2$ be a fixed constant. Define a closed ball in $\HH^s$ with radius $\sigma<1$ as follows
$$
\textbf{B}_{\sigma}:=\{w\in\HH^s |\quad \|w\|_{\HH^s}<\sigma\},
$$
and the solution map $\Tcal$ as follows
\bel{E4-16}
\Tcal w(\tau):=\textbf{S}(\tau)w_0+\int_0^t\textbf{S}(\tau-s)f(w(s))ds.
\ee
In what follows, we should prove that $\Tcal w(\tau)=w(\tau)$ by employing the Banach fixed point theorem.

To apply Banach fixed point theorem, we need to use the following inequality for the weighted Sobolev space.

\begin{lemma}\cite{Ka}
Let $s>0$. Then $\HH^s\cap\LL^{\infty}$ is an algebra, and
$$
\|uv\|_{\HH^s}\leq C\Big(\|u\|_{\LL^{\infty}}\|v\|_{\HH^s}+\|u\|_{\HH^s}\|v\|_{\LL^{\infty}}\Big),
$$
where $C$ is a postive constant depending on $s$.
\end{lemma}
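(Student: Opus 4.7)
The plan is to establish this standard algebra property via the Bony paraproduct calculus built on a dyadic Littlewood--Paley decomposition. I would fix a smooth dyadic partition $\{\Delta_j\}_{j\geq -1}$ of frequency space so that $\Delta_j u$ is supported in $|\xi|\sim 2^j$ for $j\geq 0$, and let $S_j=\sum_{k\leq j-1}\Delta_k$ denote the low-frequency projectors. The norm $\|u\|_{\HH^s}^2$ is equivalent to $\|S_0u\|_{\LL^2}^2+\sum_{j\geq 0}2^{2js}\|\Delta_j u\|_{\LL^2}^2$, so all estimates can be reduced to controlling dyadic blocks. I would then split the product using Bony's formula
\[ uv=T_uv+T_vu+R(u,v), \]
where $T_uv=\sum_j S_{j-1}u\,\Delta_j v$ is the paraproduct and $R(u,v)=\sum_{|j-k|\leq 1}\Delta_j u\,\Delta_k v$ is the remainder, and treat each of the three pieces separately in $\HH^s$.

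For the paraproduct $T_uv$, the crucial observation is that $S_{j-1}u\,\Delta_j v$ has Fourier support in an annulus of scale $2^j$, so the pieces of $T_uv$ are almost orthogonal in $\HH^s$. A one-line calculation using $\|S_{j-1}u\|_{\LL^\infty}\lesssim \|u\|_{\LL^\infty}$ gives
\[ \|T_uv\|_{\HH^s}^2\lesssim\sum_j 2^{2js}\|S_{j-1}u\|_{\LL^\infty}^2\|\Delta_j v\|_{\LL^2}^2\lesssim\|u\|_{\LL^\infty}^2\|v\|_{\HH^s}^2, \]
and the symmetric bound $\|T_vu\|_{\HH^s}\lesssim\|v\|_{\LL^\infty}\|u\|_{\HH^s}$ follows by swapping the roles of $u$ and $v$. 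These two estimates account for the right-hand side of the claimed inequality without using the hypothesis $s>0$.

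The delicate term is the remainder $R(u,v)$, because each block $\Delta_j u\,\Delta_k v$ with $|j-k|\leq 1$ has Fourier support in a \emph{ball} of radius $\sim 2^j$ rather than an annulus, so scale orthogonality is lost and a naive resummation produces a divergent geometric series. The remedy is to apply $\Delta_\ell$ to $R(u,v)$ and notice that only the blocks with $j\geq \ell-N$ for some fixed $N$ contribute; the hypothesis $s>0$ is exactly what makes the tail $\sum_{j\geq \ell}2^{(\ell-j)s}$ summable, so a discrete Young inequality on the sequences $(2^{js}\|\Delta_j u\|_{\LL^2})$ and $(2^{js}\|\Delta_j v\|_{\LL^2})$, combined with Bernstein's inequality to convert one factor to an $\LL^\infty$ norm, yields
\[ \|R(u,v)\|_{\HH^s}\lesssim\|u\|_{\LL^\infty}\|v\|_{\HH^s}+\|v\|_{\LL^\infty}\|u\|_{\HH^s}. \]
This is the step I expect to be the main obstacle: the positivity of $s$ must be used in an essential way here, as it corresponds to the well-known failure of the algebra property at $\HH^0=\LL^2$. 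Summing the three estimates proves the Moser--Kato bound and in particular shows that $\HH^s\cap\LL^\infty$ is an algebra.
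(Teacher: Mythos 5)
Your argument is correct, but note that the paper does not actually prove this lemma: it is quoted verbatim from Kato's 1975 lecture notes \cite{Ka} and used as a black box, so there is no in-paper proof to compare against. Your paraproduct route is the standard modern proof of this Moser--Kato product estimate and all three pieces are handled correctly: the two paraproducts $T_uv$ and $T_vu$ give exactly the two terms on the right-hand side via almost orthogonality of annuli and the uniform bound $\|S_{j-1}u\|_{\LL^\infty}\lesssim\|u\|_{\LL^\infty}$, and you correctly isolate the remainder $R(u,v)$ as the only place where $s>0$ enters, through the summability of $\sum_{j\geq\ell}2^{(\ell-j)s}$ after applying $\Delta_\ell$ to blocks supported in balls rather than annuli. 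The classical proofs (Moser's via Gagliardo--Nirenberg interpolation between $\LL^2$ and $\LL^\infty$ for integer $s$, or Kato--Ponce via Coifman--Meyer multiplier theory for fractional $s$) reach the same inequality; your Littlewood--Paley argument has the advantage of handling all real $s>0$ uniformly and of making transparent why the estimate fails at $s=0$. One cosmetic point: in the remainder term what you actually need is the uniform $\LL^\infty$-boundedness of the projections, $\|\Delta_ju\|_{\LL^\infty}\lesssim\|u\|_{\LL^\infty}$ (Young's inequality with the uniformly $\LL^1$-normalized kernels), rather than Bernstein's inequality, which trades Lebesgue exponents at the cost of powers of $2^j$ and would be lossy here; the estimate you write down is nevertheless the right one.
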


\begin{lemma}
Let $s>2$ be a fixed constant. Assume that $\|w_0\|_{\HH^{s+1}}<\sigma$ with $0<\sigma\ll1$. The map $\Tcal$ defined in (\ref{E4-16}) takes $\textbf{B}_{\sigma}$ into itself.
\end{lemma}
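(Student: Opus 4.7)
The plan is to split $\Tcal w(\tau)$ into the semigroup contribution from the initial data and the Duhamel nonlinear contribution, control each in $\HH^s$, and close the argument by choosing $\sigma$ small so that the sum stays strictly below $\sigma$.

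First I would invoke the dissipative estimate behind Lemma 3.4, which is just the linear part of the energy identity already used in Lemma 3.2. It supplies the decay
$$\|\textbf{S}(\tau)v\|_{\HH^s}\leq e^{-\mu\tau}\|v\|_{\HH^s},\qquad \mu:=\frac{2(b+1)+c_2-2c_1}{b+1}>0.$$
Combined with the inclusion $\HH^{s+1}\hookrightarrow\HH^s$ and the hypothesis $\|w_0\|_{\HH^{s+1}}<\sigma$, this immediately gives $\|\textbf{S}(\tau)w_0\|_{\HH^s}\leq\sigma e^{-\mu\tau}$.

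Next I would estimate the nonlinearity
$$f(w)=\bigl((c_1+c_2-b-1)(p\star w)-c_1 w\bigr)(p\star w)_{\rho_0}-c_2 w_{\rho_0}(p\star w).$$
Because $p\star$ is smoothing of order two, $\|(p\star w)_{\rho_0}\|_{\HH^s}\lesssim\|w\|_{\HH^{s-1}}$ and $\|(p\star w)_{\rho_0}\|_{\LL^\infty}\lesssim\|w\|_{\LL^2}$. Combining this with the algebra estimate of Lemma 3.6 and the Sobolev embedding $\HH^{s-1}\subset\LL^\infty$ valid for $s>2$, the two ``smooth'' pieces $(p\star w)(p\star w)_{\rho_0}$ and $w(p\star w)_{\rho_0}$ are both bounded in $\HH^s$ by $C\|w\|_{\HH^s}^2$. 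The delicate term is $w_{\rho_0}(p\star w)$, where the algebra estimate forces $\|w_{\rho_0}\|_{\HH^s}=\|w\|_{\HH^{s+1}}$, one derivative above what the ball $\textbf{B}_\sigma$ controls. This derivative loss is the main obstacle and is precisely why the hypothesis is formulated at the $\HH^{s+1}$ level for $w_0$: re-running the energy identity of Lemma 3.2 one Sobolev notch higher shows that $\textbf{S}(\tau)$ also decays in $\HH^{s+1}$ with the same exponential rate $\mu$, so along the natural Duhamel iteration the $\HH^{s+1}$ norm is propagated and obeys $\|w(s)\|_{\HH^{s+1}}\lesssim\sigma$ uniformly. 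With this in hand, $\|f(w)\|_{\HH^s}\lesssim\|w\|_{\HH^{s+1}}\|w\|_{\HH^s}\lesssim\sigma\|w\|_{\HH^s}$.

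Assembling the two bounds via Duhamel's formula yields
$$\|\Tcal w(\tau)\|_{\HH^s}\leq\sigma e^{-\mu\tau}+C\sigma\int_0^{\tau}e^{-\mu(\tau-s)}\|w(s)\|_{\HH^s}\,ds\leq\sigma e^{-\mu\tau}+\frac{C\sigma^2}{\mu},$$
which is strictly less than $\sigma$ as soon as $C\sigma/\mu<1/2$. Thus $\Tcal$ sends $\textbf{B}_\sigma$ into itself. Apart from the derivative-loss issue in the single cubic-type term $w_{\rho_0}(p\star w)$, which is bypassed by propagating one extra order of Sobolev regularity through the semigroup, the argument is a standard algebra-plus-Duhamel fixed-point set-up building on Proposition 3.1 and Lemma 3.6.
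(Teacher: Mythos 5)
Your overall architecture --- split $\Tcal w$ into $\textbf{S}(\tau)w_0$ plus the Duhamel integral, bound $f(w)$ via the algebra estimate of Lemma 3.6, the smoothing of $p\star$ and the embedding $\HH^{s-1}\subset\LL^{\infty}$, then absorb the integral using the semigroup decay --- is essentially the paper's, and you correctly put your finger on the one genuinely delicate point: in the term $w_{\rho_0}(p\star w)$ the algebra estimate produces $\|w_{\rho_0}\|_{\HH^s}\|p\star w\|_{\LL^{\infty}}=\|w\|_{\HH^{s+1}}\|p\star w\|_{\LL^{\infty}}$, a loss of one derivative relative to the ball $\textbf{B}_{\sigma}\subset\HH^s$. (The paper's own proof simply keeps only $\|w_{\rho_0}\|_{\LL^{\infty}}\|p\star w\|_{\HH^s}$ and drops the bad half of the product rule, so you are being more careful than the source here.)

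However, your proposed repair does not close. You argue that since $\textbf{S}(\tau)$ also decays in $\HH^{s+1}$, the $\HH^{s+1}$ norm of $w(s)$ is propagated along the Duhamel iteration. But to propagate $\|w(s)\|_{\HH^{s+1}}$ through Duhamel you must estimate $\|f(w(s))\|_{\HH^{s+1}}$, and the same term $w_{\rho_0}(p\star w)$ then produces $\|w\|_{\HH^{s+2}}$: the loss is not removed, only shifted up one notch, and iterating would require data in $\HH^{s+k}$ for every $k$. This is the classical obstruction to running a mild-solution/contraction scheme on a quasilinear transport-type term, and the genuine fix is the one already implicit in Lemma 3.2: in the energy pairing $\int\Lambda^s w\,\Lambda^s\bigl(w_{\rho_0}(p\star w)\bigr)d\rho_0$ one isolates the top-order piece as $\int(p\star w)\,\Lambda^s w_{\rho_0}\,\Lambda^s w\,d\rho_0=-\tfrac12\int(p\star w)_{\rho_0}(\Lambda^s w)^2d\rho_0$ after controlling the commutator $[\Lambda^s,p\star w]w_{\rho_0}$ by Lemma 3.1, so that no derivative is lost. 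A correct proof of the invariance of $\textbf{B}_{\sigma}$ should therefore run through the a priori energy estimate of Lemma 3.2 applied to the evolution (or otherwise exploit the smoothing of $p\star$ where it is actually available); as written, your bound $\|f(w)\|_{\HH^s}\lesssim\sigma\|w\|_{\HH^s}$ is not justified for the term $w_{\rho_0}(p\star w)$ when $w$ is controlled only in $\HH^s$.
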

\begin{proof}
By (\ref{E4-12R1}) and Lemma 3.6, we derive
$$
\aligned
\|f(w)\|_{\HH^{s}}
&\leq (c_1+c_2-b-1)\|(p\star w)(p\star w)_{\rho_0}\|_{\HH^s}+c_1\|(p\star w)_{\rho_0}w\|_{\HH^s}+\|(p\star w)w_{\rho_0}\|_{\HH^s}\\
&\leq C_{c_1,c_2,b}\Big(\|(p\star w)_{\rho_0}\|_{\LL^{\infty}}(\|p\star w\|_{\HH^{s}}+\|w\|_{\HH^{s}})+\|w_{\rho_0}\|_{\LL^{\infty}}\|p\star w\|_{\HH^{s}}\Big),
\endaligned
$$
then using $\HH^{s}\subset\LL^{\infty}$, $w=\Lambda^s(p(\rho_0)\star \overline{v})$ and Lemma 3.2, above inequality gives that
$$
\aligned
\|f(w)\|_{\HH^{s}}&\leq C_{c_1,c_2,b}\|w\|^2_{\HH^{s}}<C_{c_1,c_2,b}\sigma^2<\sigma.
\endaligned
$$
where $C_{c_1,c_2,b}$ is a positive constant depending on parameters $c_1,c_2,b$.

Hence we conclude that the map $\Tcal$ defined in (\ref{E4-16}) takes $\textbf{B}_{\sigma}$ into itself.
\end{proof}

\begin{lemma}
Let $s>2$ be a fixed constant. Assume that $\|w_0\|_{\HH^{s+1}}<\sigma$ with $0<\sigma\ll1$. The nonlinear equation
(\ref{E4-12}) with the initial data (\ref{E4-12R1})and the boundary condition (\ref{E4-3R4}) has a unique solution $w\in\textbf{B}_{\sigma}$.
\end{lemma}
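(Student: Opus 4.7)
The plan is a direct application of the Banach fixed point theorem on the closed ball $\textbf{B}_\sigma \subset \HH^s$, combining Lemma 3.7 (which provides the self-map property) with a contraction estimate that I now sketch.

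First I would record the contraction estimate for the nonlinearity $f$ defined in (\ref{E4-12R1}). Writing
$$
f(w_1)-f(w_2) = (c_1+c_2-b-1)\Big[(p\star w_1)(p\star w_1)_{\rho_0}-(p\star w_2)(p\star w_2)_{\rho_0}\Big]
-c_1\Big[w_1(p\star w_1)_{\rho_0}-w_2(p\star w_2)_{\rho_0}\Big]
-c_2\Big[w_{1,\rho_0}(p\star w_1)-w_{2,\rho_0}(p\star w_2)\Big],
$$
each bracket can be split symmetrically, e.g.\ $w_1(p\star w_1)_{\rho_0}-w_2(p\star w_2)_{\rho_0}=(w_1-w_2)(p\star w_1)_{\rho_0}+w_2(p\star (w_1-w_2))_{\rho_0}$. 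Applying Lemma 3.6 together with the smoothing property $\|p\star w\|_{\HH^{s+1}}\lesssim\|w\|_{\HH^s}$ (since $p$ is the fundamental solution of $1-\alpha^2\partial_{\rho_0}^2$) and the embedding $\HH^s\subset\LL^\infty$ for $s>2$, I obtain
$$
\|f(w_1)-f(w_2)\|_{\HH^s}\leq C_{c_1,c_2,b}\,\big(\|w_1\|_{\HH^s}+\|w_2\|_{\HH^s}\big)\,\|w_1-w_2\|_{\HH^s}.
$$

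Next I would use the $\mathbb{C}_0$-semigroup $(\textbf{S}(\tau))_{\tau\geq 0}$ produced by Proposition 3.1. Dissipativity of $\Lcal$ in $\HH^s$, already established in Lemma 3.4 with the sharp bound
$(\Lcal w,w)_s\leq -\tfrac{2(b+1)+c_2-2c_1}{b+1}\|w\|_{\HH^s}^2$,
together with the Lumer--Phillips generation, yields the contractive bound $\|\textbf{S}(\tau)\|_{\HH^s\to\HH^s}\leq 1$ for all $\tau\geq 0$ (in fact with exponential decay, but $1$ suffices here). Then for $w_1,w_2\in\textbf{B}_\sigma$,
$$
\|\Tcal w_1(\tau)-\Tcal w_2(\tau)\|_{\HH^s}\leq\int_0^{\tau}\|\textbf{S}(\tau-s)\|_{\HH^s\to\HH^s}\,\|f(w_1)-f(w_2)\|_{\HH^s}\,ds\leq 2C_{c_1,c_2,b}\,\sigma\,\tau\,\|w_1-w_2\|_{\HH^s}.
$$

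Here arises the only genuine obstacle: the factor $\tau$ forbids a direct global contraction. I would handle this in the standard two-step way. On a short interval $[0,\tau_\ast]$ with $\tau_\ast$ chosen so that $2C_{c_1,c_2,b}\sigma\tau_\ast<\tfrac12$, the map $\Tcal$ is a $\tfrac12$-contraction on the Banach space $C([0,\tau_\ast];\textbf{B}_\sigma)$ equipped with the sup-in-$\tau$ norm, and Lemma 3.7 ensures that it maps this set into itself; Banach's theorem then gives a unique fixed point $w\in C([0,\tau_\ast];\textbf{B}_\sigma)$. To extend to all $\tau\geq 0$, I would iterate: using the a priori estimate of Lemma 3.2, the solution satisfies $\|w(\tau_\ast)\|_{\HH^s}\lesssim e^{-\frac{2(b+1)+c_2-2c_1}{b+1}\tau_\ast}\|w_0\|_{\HH^s}<\sigma$ under (\ref{E4-4r0}), so the contraction argument can be restarted from $\tau_\ast$ with the same $\tau_\ast$, and a standard continuation argument gives a unique global mild solution $w\in C([0,\infty);\textbf{B}_\sigma)$. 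An alternative that avoids the iteration is to work from the outset with the weighted norm $\sup_{\tau\geq 0}e^{-\lambda\tau}\|w(\tau)\|_{\HH^s}$ for $\lambda>0$ large enough to absorb $2C_{c_1,c_2,b}\sigma$, which turns $\Tcal$ into a global contraction in one step; I would prefer this variant since the decay built into the semigroup from Lemma 3.4 makes it essentially free.
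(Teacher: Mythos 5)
Your proposal follows the same overall route as the paper: the Lipschitz estimate for $f$ via the symmetric splitting of the differences and Lemma 3.6 (your display is essentially the paper's (\ref{E4-17})), followed by the Banach fixed point theorem for the map $\Tcal$ of (\ref{E4-16}) on $\textbf{B}_{\sigma}$. The genuine difference is that you explicitly confront the factor $\tau$ produced by the Duhamel integral: the paper passes directly from $\|f(w)-f(\overline{w})\|_{\HH^s}\leq C_{c_1,c_2,b}\sigma\|w-\overline{w}\|_{\HH^s}$ to $\|\Tcal w(\tau)-\Tcal\overline{w}(\tau)\|_{\HH^s}\lesssim\sigma\|w-\overline{w}\|_{\HH^s}$ without accounting for the integration in time (and Lemma 3.7 has the same omission for the self-map property), whereas your local-contraction-plus-continuation argument, or the weighted-norm variant, supplies the missing step. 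Your continuation does invoke the decay of Lemma 3.2, hence the structural hypothesis (\ref{E4-4r0}); this is consistent with the context of Section 3.2 but is not stated in the lemma itself, so it is worth flagging that the global-in-$\tau$ conclusion uses it. In short, your argument is a more careful (and, as written, more complete) version of the paper's proof rather than a different method.
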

\begin{proof}
It is equivalent to prove that the solution map $\Tcal$ given in (\ref{E4-16}) has a fixed point in $\textbf{B}_{\sigma}$.
For any two $w,\overline{w}$ in $\textbf{B}_{\sigma}$, by (\ref{E4-12R1}) and Lemma 3.6, direct computations give that
\bel{E4-17}
\aligned
\|f(w)-f(\overline{w})\|_{\HH^{s}}&\lesssim\|(p\star w)(p\star w)_{\rho_0}-(p\star \overline{w})(p\star \overline{w})_{\rho_0}\|_{\HH^s}
+\|(p\star w)_{\rho_0}w-(p\star \overline{w})_{\rho_0}\overline{w}\|_{\HH^s}\\
&\quad\quad+\|(p\star w)w_{\rho_0}-(p\star \overline{w})\overline{w}_{\rho_0}\|_{\HH^s}\\
&\lesssim \|(p\star (w-\overline{w}))(p\star w)_{\rho_0}\|_{\HH^s}+\|(p\star \overline{w})(p\star (w-\overline{w}))_{\rho_0}\|_{\HH^s}\\
&\quad+\|((p\star (w-\overline{w}))_{\rho_0}w\|_{\HH^s}+\|(p\star \overline{w})_{\rho_0}(w-\overline{w})\|_{\HH^s}\\
&\quad+\|(p\star(w-\overline{w}))w_{\rho_0}\|_{\HH^s}+\|(p\star\overline{w})(w-\overline{w})_{\rho_0}\|_{\HH^s}\\
&\leq C_{c_1,c_2,b}\sigma\|w-\overline{w}\|_{\HH^s}.
\endaligned
\ee

Hence, it follows from (\ref{E4-16}) and (\ref{E4-17}) that
$$
\|\Tcal w(\tau)-\Tcal\overline{w}(\tau)\|_{\HH^s}\lesssim \sigma\|w-\overline{w}\|_{\HH^s},
$$
which implies that the solution map $\Tcal$ is contracting when we choose a sufficient small positive constant $\sigma$. 
Thus using the Banach fixed point theorem, the map $\Tcal$ defined in (\ref{E4-16}) has a fixed point in $\textbf{B}_{\sigma}$. The fixed point is the solution of nonlinear equation (\ref{E4-12}).

\end{proof}

We now return to the existence of solution for nonlinear equation (\ref{E4-2}).

\begin{proposition}
Let $s>3$. The nonlinear equation (\ref{E4-2}) with the initial data (\ref{E4-3R1}) and boundary condition (\ref{E4-3R4}) admits a global solution $v(\tau,\rho)\in\HH^s$. Moreover, if the initial data $\|v_0\|_{\HH^{s+1}}<\sigma$ with $0<\sigma\ll1$, then there is
$$
\|v\|_{\HH^s}\leq C\sigma e^{-2\tau}.
$$
\end{proposition}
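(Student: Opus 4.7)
The proof follows a clear chain: transport global existence and decay from the regularized unknown $w$ back to $v$ through the two changes of variables already introduced. Lemma 3.9 produces a unique global-in-$\tau$ fixed point $w \in \textbf{B}_\sigma$ of the integral formulation (\ref{E4-16}); since the size $\sigma$ is preserved by $\Tcal$ uniformly in $\tau$, the solution exists on $[0,\infty)$, not merely locally. Lemma 3.2 then upgrades the uniform bound $\|w(\tau)\|_{\HH^s} < \sigma$ to the quantitative decay $\|w(\tau)\|_{\HH^s} \lesssim \sigma e^{-\mu \tau}$, where $\mu := {2(b+1)+c_2-2c_1\over b+1} > 0$ is granted by the parameter hypothesis (\ref{E4-4r0}).

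Next, I would invert $w \mapsto \overline{v}$ using the Helmholtz kernel $p(\rho_0) = (2\alpha)^{-1} e^{-|\rho_0/\alpha|}$. Since $p \in \LL^1(\RR)$ with $\|p\|_{\LL^1} = 1$, Young's convolution inequality gives $\|\overline{v}(\tau)\|_{\HH^s} \lesssim \|w(\tau)\|_{\HH^s} \lesssim \sigma e^{-\mu \tau}$, and in fact convolution with $p$ gains two derivatives, so $\|\overline{v}\|_{\HH^{s+2}} \lesssim \|w\|_{\HH^s}$; this is why the Proposition requires $s>3$ rather than the $s>2$ used in the intermediate lemmas. I then invert $\overline{v} \mapsto v$ via $v(\tau,\rho) = e^\tau \overline{v}(\tau, e^{-\tau}\rho)$. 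A direct change of variables $\rho_0 = e^{-\tau}\rho$ produces, for each $0 \leq k \leq s$,
$$
\|\del_\rho^k v(\tau,\cdot)\|_{\LL^2}^2 = e^{(3-2k)\tau}\|\del_{\rho_0}^k \overline{v}(\tau,\cdot)\|_{\LL^2}^2,
$$
and summing over $k$ reassembles the $\HH^s_\rho$ norm of $v$ out of weighted $\HH^s$ norms of $\overline{v}$, which by the decay above yields $\|v(\tau,\cdot)\|_{\HH^s}\lesssim \sigma e^{-2\tau}$. Because each step is continuous in the indicated topologies, the inverse transformations also carry the Cauchy-problem formulation of (\ref{E4-12}) back into a bona fide solution of (\ref{E4-2}) satisfying the initial and boundary conditions.

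The main obstacle is the careful balancing of exponential factors at the last step: the pre-factor $e^\tau$ in $v = e^\tau \overline{v}$ together with the Jacobian $e^\tau$ from $d\rho = e^\tau d\rho_0$ introduce growing powers of $e^\tau$ that must be absorbed by the decay $e^{-\mu\tau}$ of $\overline{v}$. The hypothesis (\ref{E4-4r0}) makes $\mu$ large enough that the net exponent is strictly negative, and the regularity $s>3$ guarantees that the dominant contributions in the sum over $k$ organize themselves so as to produce exactly the stated rate $e^{-2\tau}$. No new analytic ingredient is needed beyond Lemmas 3.2 and 3.9; the entire content of the proof lies in tracking this algebra through the two coordinate changes.
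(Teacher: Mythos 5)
Your overall route is the same as the paper's: take the global solution $w$ produced by the fixed-point argument, apply the decay estimate of Lemma 3.2, and undo the two substitutions $w\mapsto\overline v=p\star w$ and $\overline v\mapsto v=e^{\tau}\overline v(\tau,e^{-\tau}\rho)$. Your scaling identity $\|\del_\rho^k v\|_{\LL^2}^2=e^{(3-2k)\tau}\|\del_{\rho_0}^k\overline v\|_{\LL^2}^2$ is correct, and it is precisely here that the argument breaks down rather than closes: the $k=0$ term contributes $e^{3\tau/2}\|\overline v\|_{\LL^2}$ and the $k=1$ term contributes $e^{\tau/2}\|\overline v_{\rho_0}\|_{\LL^2}$, while the only decay available is $e^{-\mu\tau}$ with $\mu={2(b+1)+c_2-2c_1\over b+1}$ assumed merely positive in (\ref{E4-4r0}). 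To extract $e^{-2\tau}$ from the $k=0$ term you would need $\mu\geq 7/2$, which nothing in the hypotheses provides. The closing claim that for $s>3$ the dominant contributions ``organize themselves'' into the rate $e^{-2\tau}$ is an assertion, not a proof; as computed, the dominant contribution grows in $\tau$.

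For comparison, the paper's own proof does not resolve this either: it writes $v_{\rho\rho}=e^{-\tau}w$ (in fact $v_{\rho\rho}=e^{-\tau}\overline v_{\rho_0\rho_0}=\alpha^{-2}e^{-\tau}(p\star w-w)$) and then asserts $\|v\|_{\HH^s}\leq e^{-\tau}\|w\|_{\HH^{s-2}}$, which silently discards both the Jacobian of the change of variables and the $k=0,1$ components of the $\HH^s$-norm of $v$; it also lands on the rate $e^{-{3(b+1)+c_2-2c_1\over b+1}\tau}$ (the rate quoted in Theorem 1.2), not the $e^{-2\tau}$ appearing in the statement of the Proposition, so the statement and its proof are already mutually inconsistent. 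Your more honest bookkeeping therefore exposes a genuine gap that the paper glosses over rather than an error peculiar to your write-up. To close it one would have to either measure $v$ in a norm adapted to the self-similar variable (for instance control only $\del_\rho^k v$ for $k\geq 2$, which is what the passage through $w$ actually delivers), or impose a quantitative lower bound on $\mu$ strong enough to absorb the factor $e^{3\tau/2}$.
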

\begin{proof}
By Lemma 3.8, we have a global solution of equation (\ref{E4-2}) as follows
\bel{E4-Y1}
v(\tau,\rho)=e^{\tau}\overline{v}(\tau,\rho_0)=e^{\tau}(p\star w(\tau,\rho_0)),
\ee
where $w(\tau,\rho_0)$ is a global solution of equation (\ref{E4-12}) given in Lemma 3.8, and $\rho_0=e^{-\tau}\rho$.

Furthermore, It follows from (\ref{E4-Y1}) that $v_{\rho\rho}=e^{-\tau}w$. So by Lemma 3.2, we derive
$$
\|v\|_{\HH^s}\leq e^{-\tau}\|w\|_{\HH^{s-2}}\lesssim e^{-{3(b+1)+c_2-2c_1\over b+1}\tau}\|w_0\|_{\HH^{s-2}}\lesssim \sigma e^{-{3(b+1)+c_2-2c_1\over b+1}\tau}.
$$

\end{proof}

From Proposition 3.2, it is easy to see nonlinear stability of explicit self-similar blowup solution (\ref{ER1-01}) for the generalized $b$-equation (\ref{E1-1R00})
 by asssumption (\ref{E4-4r0}).

\subsection{Instability of self-similar blowup solutions}

It is a natural problem to consider the case of parameters unsatisfying the asssumption (\ref{E4-4r0}).
This is related to instability of self-similar blowup solutions for the generalized $b$-equation (\ref{E1-1R00}).

\begin{proposition}
Let $s>2$ and $\alpha\neq0$. Assume that parameters $c_1,c_2,b$ satisfy
\bel{E5-1}
{2(b+1)+c_2-2c_1\over b+1}<0,
\ee
or
\bel{E5-1R0}
\aligned
&2(b+1)+c_2-2c_1=0,\\
&{2c_1+1-c_2\over b+1}< 0.
\endaligned
\ee
Then the solution $w$ of equation (\ref{E4-3}) satisfies 
$$
\|w\|_{\HH^s}\geq e^{C_{c_1,c_2,b}\tau}\|w_0\|_{\HH^s},
$$
where $C_{c_1,c_2,b}$ is a positive constant depending on $c_1,c_2,b$.
\end{proposition}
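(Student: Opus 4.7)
The plan is to run the same $\HH^s$ energy computation used in Lemma~3.2, but retain the exact identity instead of immediately dropping the linear terms by inequality. Applying $\Lambda^s$ to equation~(\ref{E4-3}), pairing in $\LL^2$ with $\Lambda^sw$, using the integration-by-parts identities (\ref{E4-6R1}) for the linear contributions and the commutator bounds (\ref{E4-6})--(\ref{E4-9}) for the nonlinear ones, one arrives at the identity
\begin{equation*}
\frac{d}{d\tau}\|w\|^2_{\HH^s} + \frac{2(b+1)+c_2-2c_1}{b+1}\|w\|^2_{\HH^s} + \frac{2c_1+1-c_2}{b+1}\|w\|^2_{\HH^{s-1}} = 2N(w),
\end{equation*}
with $|N(w)|\lesssim\|w\|^3_{\HH^s}$. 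This identity, sign-tracked rather than inequality-bounded, is the common starting point for both hypotheses.

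Under hypothesis (\ref{E5-1}), set $A:=-\frac{2(b+1)+c_2-2c_1}{b+1}>0$ and $B:=\frac{2c_1+1-c_2}{b+1}$. Using $\|w\|^2_{\HH^{s-1}}\leq\|w\|^2_{\HH^s}$ to absorb the $B$-term when it has the unfavorable sign yields a constant $\gamma=\gamma(c_1,c_2,b)>0$ with $\frac{d}{d\tau}E\geq \gamma E - CE^{3/2}$, where $E(\tau):=\|w\|^2_{\HH^s}$. This is the time-reversed Bernoulli inequality corresponding to the one exploited in Lemma~3.2. Substituting $F=E^{-1/2}$ linearizes it to $F'\leq -\frac{\gamma}{2}F+\frac{C}{2}$, and a Gronwall integration gives $F(\tau)\leq F(0)e^{-\gamma\tau/2}+\frac{C}{\gamma}(1-e^{-\gamma\tau/2})$. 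For $\|w_0\|_{\HH^s}$ small enough that $F(0)$ dominates the inhomogeneous term on the time scale of interest, inverting this yields $\|w(\tau)\|_{\HH^s}\geq e^{\gamma\tau/2}\|w_0\|_{\HH^s}$, so that $C_{c_1,c_2,b}=\gamma/2$ in the statement.

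Under hypothesis (\ref{E5-1R0}) one has $A=0$ and $B<0$, so the identity reduces to $\frac{d}{d\tau}\|w\|^2_{\HH^s}=-B\|w\|^2_{\HH^{s-1}}+2N(w)$. The naive bound $\|w\|^2_{\HH^{s-1}}\geq 0$ only gives $E'\geq -CE^{3/2}$, which is merely polynomial in time; one must therefore recover a positive lower bound for $\|w\|^2_{\HH^{s-1}}$ relative to $\|w\|^2_{\HH^s}$ along trajectories. The plan is to run the parallel $\HH^{s-1}$ energy identity, observe that the roles of $A,B$ transport down a Sobolev level, and couple the two identities in a bootstrap that keeps the ratio $\|w\|^2_{\HH^{s-1}}/\|w\|^2_{\HH^s}$ bounded below as long as the solution exists; this ratio control feeds back into the top-order identity to produce exponential growth at some rate $C_{c_1,c_2,b}>0$, after which the Bernoulli manipulation of the previous paragraph applies verbatim.

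The main obstacle is precisely this coupling between the $\HH^s$ and $\HH^{s-1}$ energies in case~(\ref{E5-1R0}), since no automatic Poincar\'e-type inequality $\|w\|^2_{\HH^{s-1}}\gtrsim\|w\|^2_{\HH^s}$ holds on Sobolev space; one has to exploit the specific nonlocal structure of (\ref{E4-3}), the smoothing of $p\star$, and the decay boundary condition (\ref{E4-3R4}) to produce the needed lower bound along the flow. A secondary delicate point, already present in case~(\ref{E5-1}), is ensuring that the sign of $B$ together with the embedding $\HH^s\hookrightarrow\HH^{s-1}$ leaves a net positive linear growth rate; writing $\gamma$ explicitly in terms of $c_1,c_2,b$ and verifying its positivity under (\ref{E5-1}) is the only remaining routine check.
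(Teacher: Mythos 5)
Your starting point --- the sign-tracked energy identity with $|N(w)|\lesssim\|w\|_{\HH^s}^3$ --- is the same computation the paper uses to reach (\ref{E5-2}), and your insistence on keeping the nonlinear remainder two-sided is in fact necessary: a lower bound on $\frac{d}{d\tau}\|w\|^2_{\HH^s}$ cannot be extracted from the one-sided inequality (\ref{E5-2}) alone, and the paper's subsequent Bernoulli substitution inverts an inequality in a direction it does not support. Your Gronwall step via $F=E^{-1/2}$ is sound as far as it goes, with the caveat that it yields $\|w(\tau)\|_{\HH^s}\geq e^{\gamma\tau/2}\|w_0\|_{\HH^s}$ only on a time interval of length $O(\log(1/\|w_0\|_{\HH^s}))$, after which the inhomogeneous term $C/\gamma$ saturates the bound; the statement as written claims the estimate for all $\tau$, and neither your argument nor the paper's delivers that.

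There are, however, two genuine gaps. First, in case (\ref{E5-1}) the step you defer as a ``routine check'' is not routine and in general fails. With your notation $A=-\frac{2(b+1)+c_2-2c_1}{b+1}>0$ and $B=\frac{2c_1+1-c_2}{b+1}$, absorbing the $B$-term via $\|w\|^2_{\HH^{s-1}}\leq\|w\|^2_{\HH^s}$ when $B>0$ leaves the net linear rate $\gamma=A-B=-\frac{2b+3}{b+1}$, which is independent of $c_1,c_2$ and is positive only for $b\in(-3/2,-1)$; hypothesis (\ref{E5-1}) alone therefore does not guarantee $\gamma>0$, so the claimed exponential growth is not established on the full parameter range unless $B\leq0$ is additionally assumed. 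Second, case (\ref{E5-1R0}) is left entirely as a plan: the bootstrap producing a reverse bound $\|w\|^2_{\HH^{s-1}}\gtrsim\|w\|^2_{\HH^s}$ along the flow is precisely the missing ingredient, no such inequality holds on Sobolev spaces, and nothing in the structure of (\ref{E4-3}) obviously supplies it. (For what it is worth, the paper's own treatment of this case invokes $\HH^s\subset\HH^{s-1}$, which bounds $\|w\|^2_{\HH^{s-1}}$ from \emph{above} by $\|w\|^2_{\HH^s}$ and hence can only yield an upper bound on the growth, so the difficulty you isolate is real and is not resolved there either.) In short, you have correctly located both weak points of the argument, but neither hypothesis is actually carried to the stated conclusion.
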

\begin{proof}
We directly apply (\ref{E4-6R1})-(\ref{E4-9}) to (\ref{E4-5}), there is 
\bel{E5-2}
{d\over d\tau}\|w\|^2_{\HH^s}+{2(b+1)+c_2-2c_1\over b+1}\|w\|^2_{\HH^s}+{2c_1+1-c_2\over b+1}\|w\|^2_{\HH^{s-1}} \lesssim\|w\|^3_{\HH^s}.
\ee

If (\ref{E5-1}) holds,
then by (\ref{E5-2}), there is a positive constant $C_{c_1,c_2,b}$ depending on $c_1,c_2,b$ such that
$$
{d\over d\tau}\|w\|^2_{\HH^s}-C_{c_1,c_2,b}\|w\|^2_{\HH^s}\lesssim\|w\|^3_{\HH^s},
$$
which is a Bernoulli-type differential inequality, it is equivalent to 
$$
-{d\over d\tau}\|w\|^{-1}_{\HH^s}-C_{c_1,c_2,b}\|w\|^{-1}_{\HH^s}\lesssim 1,
$$
this means that
\bel{E5-4}
\|w\|_{\HH^s}\geq e^{C_{c_1,c_2,b}\tau}\|w_0\|_{\HH^s}.
\ee

If (\ref{E5-1R0}) holds, then by (\ref{E5-2}) and $\HH^{s}\subset\HH^{s-1}$ with $s>1$, there is a positive constant $C'_{c_1,c_2,b}$ depending on $c_1,c_2,b$ such that
$$
{d\over d\tau}\|w\|^2_{\HH^s}-C'_{c_1,c_2,b}\|w\|^2_{\HH^{s}}\lesssim\|w\|^3_{\HH^s},
$$
which also gives (\ref{E5-4}).
\end{proof}

It follows from Proposition 3.3 that the instability of self-similar blowup solutions holds for the generalized $b$-equation (\ref{E1-1R00}) by the assumption (\ref{E5-1}) or (\ref{E5-1R0}).

\


\
\
\

\textbf{Acknowledgments.} 
The author is supported by NSFC No 11771359.


\begin{thebibliography}{xx}



\bibitem{Bo}
 J. Bourgain, Fourier transform restriction phenomena for certain lattice subsets and applications to nonlinear evolu
tion equations II: The KdV equation, Geom. Funct. Anal. (1993) 209-262.



\bibitem{Br1}
L. Brandolese, Local-in-space criteria for blowup in shallow water and dispersive rod equations.
Commun. Math. Phys. 330 (2014) 401-414.










\bibitem{Bre1}
A. Bressan, A. Constantin, Global conservative solutions of the Camassa-Holm equation. Arch. Ration. Mech. Anal. 183 (2007) 215-239.


\bibitem{Bre2}
A. Bressan, A. Constantin, Global dissipative solutions of the Camassa-Holm equation. Anal. Appl. 5 (2007) 1-27.



\bibitem{CH}
R. Camassa, D. Holm,  An integrable shallow water equation with peaked solitons. Phys. Rev. Lett. 71 (1993) 1661-1664.



\bibitem{CCH}
X.K. Chang, X.M. Chen, X.B. Hu, A generalized nonisospectral Camassa-Holm equation and its multipeakon solutions. Adv. Math. 263 (2014) 154-177.







\bibitem{C9}
A. Constantin, On the Cauchy problem for the periodic Camassa-Holm equation. J. Differential Equations. 141 (1997) 218-235.






\bibitem{C11}
A. Constantin, The Hamiltonian structure of the Camassa-Holm equation, Exposition. Math. 15 (1997) 53-85.


\bibitem{C7}
A. Constantin, J. Escher, Global existence and blow-up for a shallow water equation. 
Ann. Scuola Norm. Pisa. 26 (1998) 303-328.





\bibitem{C0}
A. Constantin, Existence of permanent and breaking waves for a shallow water equation: a geometric approach. Ann. Inst. Fourier, 50 (2000) 321-362.




\bibitem{C1}
A. Constantin, On the scattering problem for the Camassa-Holm equation. Proc. R. Soc. Lond. A. 457 (2001) 953-970.




\bibitem{C2}
A. Constantin,  J. Escher,  Wave breaking for nonlinear nonlocal shallow water equations. Acta Math. 181 (1998) 229-243 .





\bibitem{C10}
A. Constantin, L. Molinet, Global weak solutions for a shallow water equation. 
Comm. Math. Phys. 211 (2000) 45-61.







\bibitem{C3}
A. Constantin, A., Strauss, Stability of peakons. Commun. Pure Appl. Math. 53 (2000) 603-610.


\bibitem{C4}
A. Constantin, W. Strauss, Stability of the Camassa-Holm solitons. J. Nonlinear Sci. 12 (2002) 415-422.




\bibitem{C5}
A. Constantin, L. Molinet,  Orbital stability of the solitary waves for a shallow water equation. Phys. D. 157 (2001) 75-89.




\bibitem{C6}
A. Constantin, W.A. Strauss, Stability of a class of solitary waves in compressible elastic rods. 
Phys. Lett. A. 270 (2000) 140-148.


\bibitem{C8}
A. Constantin, D. Lannes, The hydrodynamical relevance of the Camassa-Holm and Degasperis-Procesi
equations. Arch. Ration. Mech. Anal. 192 (2009) 165-186.














\bibitem{DP}
A. Degasperis, M. Procesi, Asymptotic integrability. In: Symmetry and Perturbation Theory, edited by A. Degasperis, G. Gaeta, Singapore: World Scientific, 1999, pp. 23-37



\bibitem{DHH}
A. Degasperis, D.D. Holm, A.N.W. Hone, A New Integral Equation with Peakon Solu- tions. Theo. Math. Phys. 133 (2002) 1463-1474.




\bibitem{DJ}
P.G. Drazin, R.S. Johnson, Solitons: An Introduction, Cambridge Univ. Press, Cambridge, 1989.











\bibitem{DGH1}
H. R. Dullin, G. A. Gottwald, and D. D. Holm, Camassa-Holm, Korteweg-de Vries-$5$ and other asymptoti
cally equivalent equations for shallow water waves, Fluid Dyn. Res. 33 (2003) 73-79.



\bibitem{DGH2}
H. R. Dullin, G. A. Gottwald, and D. D. Holm, On asymptotically equivalent shallow water wave equations,
Phys. D 190 (2004) 1-14.






\bibitem{ELY}
J. Escher, Y. Liu, Z.Y. Yin, Global weak solutions and blow-up structure for the Degasperis-Procesi equation. J. Funct. Anal. 241 (2006) 457-485.





\bibitem{EY}
J. Escher, Z.Y. Yin, Well-posedness, blow-up phenomena, and global solutions for the $b$-equation.
J. reine angew. Math. 624 (2008) 51-80.



\bibitem{EK}
J. Escher, B. Kolev, The Degasperis-Procesi equation as a non-metric Euler equation. Math. Z. 269 (2011) 1137-1153.


\bibitem{FW}
 G. Fornberg, G.B. Whitham, A numerical and theoretical study of certain nonlinear wave phenomena, Philos. Trans. R. Soc. Lond. Ser. A 289 (1361) (1978) 373-404.



\bibitem{HK}
S. Hakkaev, K. Kirchev,
Local well-posedness and orbital stability of solitary wave solutions for the generalized Camassa-Holm equation.
Commun. PDE. 30 (2005) 761-781.






\bibitem{HC}
 A. Himonas, C. Holliman, On well-posedness of the Degasperis-Procesi equation. Discrete Contin. Dyn. Syst. 31 (2011) 469-488.


\bibitem{HHG}
 A.A. Himonas, C. Holliman, K. Grayshan, Norm Inflation and ill-Posedness for the Degasperis-Procesi Equation. Commun. PDE. 39 (2014) 2198-2215.





\bibitem{Ho}
A.N.W. Hone, J.P. Wang, Prolongation algebras and Hamiltonian operators for peakon equations, Inverse Problems. 19 (2003) 129-145.


\bibitem{Iv}
R.I. Ivanov, On the integrability of a class of nonlinear dispersive wave equations, J. Nonlinear Math. Phys. 12
(2005) 462-468.


\bibitem{Ka}
T. Kato, Quasi-linear equations of evolution, with applications to partial differential equations, in: Spectral Theory
and Differential Equations (Proc. Sympos.), Dundee, 1974, in: Lecture Notes in Math., vol. 448, Springer, Berlin,
1975, pp. 25-70, dedicated to Konrad J\"{o}rgens.



\bibitem{KPV}
C. Kenig, G. Ponce, L. Vega, Well-posedness and scattering results for the generalized Korteweg-de Vries equation
via the contraction principle, Comm. Pure Appl. Math. 46 (1993) 527-620.






\bibitem{Liu}
Y. Liu, Global existence and blowup solutions for a nonlinear shallow water equation.
Math. Ann. 335 (2006) 717-735.

\bibitem{LY}
Y. Liu and Z. Yin, Global existence and blow-up phenomena for the Degasperis-Procesi equation, Comm. Math. Phys. 267 (2006) 801-820.


\bibitem{Lun}
H. Lundmark, Formation and dynamics of shock waves in the Degasperis-Procesi equation. J. Nonlinear Sci. 17 (2007) 169-198.



\bibitem{Mc}
H. Mckean, Breakdown of the Camassa-Holm equation. Commun. Pure Appl. Math. 57 (2004) 416-418.















\bibitem{P}
A. Pazy, Semigroups of Linear Operators and Applications to Partial Differential Equations,
Springer-Verlag, New York, 1983.


\bibitem{Tao}
T. Tao, Low-regularity global solutions to nonlinear dispersive equations, in: Surveys in Analysis and Operator
Theory, Canberra, 2001, in: Proc. Centre Math. Appl. Austral. Nat. Univ., vol. 40, Austral. Nat. Univ., Canberra,
2002, pp. 19-48.




\bibitem{Wei}
L. Wei, Wave breaking analysis for the Fornberg-Whitham equation. J. Differential Equations. https://doi.org/10.1016/j.jde.2018.04.054





\bibitem{W}
G.B. Whitham, Variational methods and applications to water waves, Proc. R. Soc. A. 299 (1967) 6-25.






\bibitem{Yan1}
W.P. Yan, Asymptotic stability and instability of explicit self-similar waves for a class of nonlinear shallow water equations. Preprint.

\bibitem{Yan2}
W.P. Yan, Asymptotic stability of explicit self-similar waves for the Camassa-Holm and Degasperis-Procesi equations. Preprint.



\bibitem{Yin1}
Z. Yin, On the Cauchy problem for an integrable equation with peakon solutions, Illinois J. Math. 47 (2003) 649-666.

\bibitem{Yin2}
[Z. Yin, Global solutions to a new integrable equation with peakons, Indiana Univ. Math. J. 53 (2004) 1189-1210.




































\end{thebibliography}
\end{document}